\newcommand{\NN}{\mathbb N}
\newcommand{\CC}{\mathbb C}
\newcommand{\RR}{\mathbb R}
\newcommand{\ZZ}{\mathbb Z}
\newcommand{\EE}{\mathcal E}
\newcommand{\DD}{\mathcal D}
\newcommand{\SSS}{\mathcal S}
\theoremstyle{plain}
\newtheorem{theorem}{Theorem}[section]
\newtheorem{proposition}[theorem]{Proposition}
\newtheorem{lemma}[theorem]{Lemma}
\newtheorem{corollary}[theorem]{Corollary}
\theoremstyle{remark}
\newtheorem{remark}[theorem]{Remark}
\theoremstyle{definition}
\numberwithin{equation}{section}
\newcommand{\supp}{\operatorname{supp}}
\begin{document}

\title[Extension of Localisation Operators]{Extension of Localisation Operators to Ultradistributional Symbols With Super-Exponential Growth}

\author[S. Pilipovi\'c]{Stevan Pilipovi\'c}
\address{Stevan Pilipovi\'c, Department of Mathematics and Informatics,
University of Novi Sad, Trg Dositeja Obradovi\'{c}a 4, 21000 Novi Sad, Serbia}
\email{stevan.pilipovic@dmi.uns.ac.rs}

\author[B. Prangoski]{Bojan Prangoski}
\thanks{B. Prangoski was partially supported by the bilateral project ``Microlocal analysis and applications'' funded by the Macedonian and Serbian academies of sciences and arts}
\address{Bojan Prangoski, Faculty of Mechanical Engineering\\ Ss. Cyril and Methodius University in Skopje \\ Karpos II bb \\ 1000 Skopje \\ Macedonia}
\email{bprangoski@yahoo.com}

\author[\DJ. Vuckovi\' c]{\DJ or\dj  e Vu\v{c}kovi\' c}
\address{\DJ or\dj e Vu\v{c}kovi\' c, Mathematical Institute of the Serbian Academy of Sciences and Arts, Belgrade, Serbia}
\email{djvuckov@turing.mi.sanu.ac.rs}
\thanks{\DJ. Vu\v{c}kovi\' c was partially supported by the MI SANU}

\keywords{localisation operators; Anti-Wick quantisation; extensions with ultradistribution symbols; ultradistributions}

\subjclass[2020]{\emph{Primary.} 47G30 \emph{Secondary.} 46F05, 35S05}
\date{}

\keywords{localisation operators; Anti-Wick quantisation; extensions with ultradistribution symbols; ultradistributions}

\subjclass[2010]{\emph{Primary.} 47G30 \emph{Secondary.} 46F05, 35S05}

\begin{abstract}
In the Gelfand-Shilov setting, the localisation operator $A^{\varphi_1,\varphi_2}_a$ is equal to the Weyl operator whose symbol is the convolution of $a$ with the Wigner transform of the windows $\varphi_2$ and $\varphi_1$. We employ this fact, to extend the definition of localisation operators to symbols $a$ having very fast super-exponential growth by allowing them to be mappings from $\DD^{\{M_p\}}(\RR^d)$ into $\DD'^{\{M_p\}}(\RR^d)$, where $M_p$, $p\in\NN$, is a non-quasi-analytic Gevrey type sequence. By choosing the windows $\varphi_1$ and $\varphi_2$ appropriately, our main results show that one can consider symbols with growth in position space of the form $\exp(\exp(l|\cdot|^q))$, $l,q>0$.
\end{abstract}

\maketitle

\section{Introduction}

The usual framework for the Anti-Wick calculus, and localisation operators in general, are the Schwartz distributions $\mathcal S'(\RR^d)$ and their natural extensions the Gelfand-Shilov spaces $\mathcal S'^{\{M_p\}}(\RR^d)$, where $M_p$, $p\in\NN$, is a Gevrey type sequence which controls the regularity and decay of the test functions. The localisation operator $A_a^{\varphi_1,\varphi_2}$ with symbol $a\in\SSS'^{\{M_p\}}(\RR^{2d})$ and windows $\varphi_1,\varphi_2\in\SSS^{\{M_p\}}(\RR^d)$ is usually defined via the short-time Fourier transform $V_{\varphi_1}$ and the adjoint $V^*_{\varphi_2}$ to $V_{\varphi_2}$:
\begin{equation}\label{locope-for-nes-b-i-neesk}
A^{\varphi_1,\varphi_2}_a:\SSS^{\{M_p\}}(\RR^d)\rightarrow \SSS'^{\{M_p\}}(\RR^d),\, A^{\varphi_1,\varphi_2}_a \psi= V^*_{\varphi_2}(aV_{\varphi_1}\psi)
\end{equation}
(or with $\SSS$ and $\SSS'$ in place of $\SSS^{\{M_p\}}$ and $\SSS'^{\{M_p\}}$ in the distributional setting); when both $\varphi_1$ and $\varphi_2$ are the Gaussian, \eqref{locope-for-nes-b-i-neesk} is the Anti-Wick quantisation of $a$. Their properties (as a quantisation procedure) and applications in the theory of pseudo-differential operators are extensively studied (see for example \cite{berezin,engl,T2,luef-skr,NR,PP,ppv-j-ana-math-phys,Shubin,teofanov}); they are also extensively investigated as an important tool in time-frequency analysis (see \cite{cord-gro,dou,teofanov-d,T1,wong}). The key property of the localisation operators is their close connection with the Weyl quantisation:
\begin{equation}\label{equ-rel-loc-weyl-quat-eqt1}
A^{\varphi_1,\varphi_2}_a=b^w,\quad \mbox{with}\quad b:= a*W(\varphi_2,\varphi_1),
\end{equation}
where $W(\varphi_2,\varphi_1)$ is the Wigner transform of $\varphi_2$ and $\varphi_1$ and $b^w$ stands for the Weyl quantisation of the (ultra)distribution $b$. The aim of this article is to extend the definition of localisation operators to symbols having very fast growth by considering them as mappings from $\DD^{\{M_p\}}(\RR^d)$ to $\DD'^{\{M_p\}}(\RR^d)$ ($\DD'^{\{M_p\}}(\RR^d)$ is the Gevrey class of ultradistributions associated to the non-quasi-analytic sequence $M_p$, $p\in\NN$, and $\DD^{\{M_p\}}(\RR^d)$ is the corresponding test space of compactly supported functions). The general idea is to employ \eqref{equ-rel-loc-weyl-quat-eqt1} as a definition by allowing the convolution to be considered in $\DD'^{\{M_p\}}$-sense (see Subsection \ref{ultradis-con-def}) provided the Weyl quantisation $b^w$ is a well-defined and continuous mapping from $\DD^{\{M_p\}}(\RR^d)$ into $\DD'^{\{M_p\}}(\RR^d)$. For this purpose, in Section \ref{wigner-tra-ult-dif-rdc}, we analyse the Wigner transform of ultradifferentiable functions with super-exponential decay. The main results of the article, Theorem \ref{ext22} and Theorem \ref{ext221}, give classes of windows and symbols for which this procedure can be implemented. For example, they are applicable with:
\begin{itemize}
\item[$(i)$] $\varphi_1(x)=\varphi_2(x)=e^{-r(1+|x|^2)^{q/2}}$, $q\geq 1$, $r>0$, and $a$ a finite sum of terms of the form $(P(D)(e^{l(1+|x|^2)^{q/2}}))\otimes g(\xi)$ where $l\in(0,2r)$, $g\in\SSS'^{\{M_p\}}(\RR^d)$ and $P(D)$ is any (ultra)differential operator; notice that $a$ can grow exponentially (almost) twice as fast in the $x$-variable compared to the decay of $\varphi_1$ and $\varphi_2$;
\item[$(ii)$] $\varphi_1(x)=\varphi_2(x)=\exp(-e^{(1+|x|^2)^q})$, $q>0$, and $a$ a finite sum of terms of the form $(P(D)(\exp(e^{l(1+|x|^2)^q})))\otimes g(\xi)$ where $l\in(0,1)$, $g\in\SSS'^{\{M_p\}}(\RR^d)$ and $P(D)$ is any (ultra)differential operator.
\end{itemize}
It is important to point out that a similar general idea is employed in \cite[Section 5]{PP} for the Anti-Wick operators, i.e. the case when $\varphi_1=\varphi_2=$ the Gaussian; in this case, the problem boils down to studying the convolution of ultradistributions with a Gaussian function (recall that the Wigner transform of a Gaussian is again a Gaussian).\\
\indent Finally, we mention that, by employing the mapping properties of the short-time Fourier transform from \cite{and-jasson,DV}, it is possible to extend the definition of the localisation operators with symbols belonging to tensor products of anisotropic Gelfand-Shilov spaces (see Remark \ref{rem-for-res-ajr} below). The benefit of our approach is that one can be more precise on the allowed growth of the symbols and windows.

\section{Preliminaries}

We employ the standard notations: $\langle x\rangle=(1+|x|^2)^{1/2}$, $x\in\RR^d$, and $D^{\alpha}=D^{\alpha_1}_{x_1}\ldots D^{\alpha_d}_{x_d}$, $\alpha\in\NN^d$, where $D^{\alpha_j}_{x_j}=i^{-|\alpha_j|}\partial^{\alpha_j}/\partial x_j^{\alpha_j}$, $j=1,\ldots,d$. We fix the constants in the Fourier transform as follows: $\mathcal{F}f(\xi)=\int_{\RR^d} e^{-2\pi i x\xi}f(x)dx$, $f\in L^1(\RR^d)$.\\
\indent A sequence of positive numbers $(M_p)_{p\in\NN}$ will be called a \textit{weight sequence} if $M_0=1$, $M_p^{1/p}\rightarrow\infty$, as $p\rightarrow \infty$, and it satisfies the following conditions (\cite{Komatsu1}):\\
\indent $(M.1)$ $M_{p}^{2} \leq M_{p-1} M_{p+1}$, $p \in\ZZ_+$;\\
\indent $(M.2)$ $M_{p} \leq c_0H^{p} \min_{0\leq q\leq p} \{M_{p-q} M_{q}\}$, $p,q\in \NN$, for some $c_0,H\geq1$.\\
\noindent Sometimes we will impose the following additional condition on a weight sequence $(M_p)_{p\in\NN}$\\
\indent $(M.3)'$ $\sum_{p=1}^{\infty}M_{p-1}/M_p<\infty$,\\
\noindent or the stronger condition\\
\indent $(M.3)$ $\sum_{j=p+1}^{\infty}M_{j-1}/M_j\leq c_0pM_p/M_{p+1}$, $p\in\ZZ_+$, for some $c_0\geq 1$;\\
we will always emphasise when we impose either one of these conditions. The Gevrey sequence $p!^{\sigma}$, $p\in\NN$, satisfies $(M.1)$ and $(M.2)$ when $\sigma>0$; when $\sigma>1$ it also satisfies $(M.3)$.\\
\indent Given a weight sequence $(M_p)_{p\in\NN}$, we set $m_p=M_p/M_{p-1}$, $p\in\ZZ_+$. For $\alpha\in\NN^d$, we use the notation $M_{\alpha}$ for $M_{|\alpha|}$, $|\alpha|=\alpha_1+...+\alpha_d$. If $(A_p)_{p\in\NN}$ is another weight sequences, the notation $A_p\subset M_p$ means $A_p\leq CL^pM_p$, $p\in\NN$, for some $C,L>0$. Following Komatsu \cite{Komatsu1}, we define the associated function for the weight sequence $(M_{p})_{p\in\NN}$ by
\begin{equation}\label{assoc-fun-sec}
M(\rho):=\sup_{p\in\NN}\ln   \rho^{p}/M_{p},\quad \rho> 0.
\end{equation}
It is a non-negative, continuous, monotonically increasing function, which vanishes for sufficiently small $\rho>0$ and increases more rapidly then $\ln \rho^p$, when $\rho$ tends to infinity, for any $p\in\NN$. When $M_p=p!^{\sigma}$, $\sigma>0$, we have $M(\rho)\asymp \rho^{1/\sigma}$. Employing the inequality $(\lambda+\rho)^p\leq 2^p\lambda^p+2^p\rho^p$, $\lambda,\rho\geq 0$, $p\in\NN$, one easily verifies the bound
\begin{equation}\label{ine-for-seq-assoc}
e^{M(\lambda+\rho)}\leq 2e^{M(2\lambda)}e^{M(2\rho)},\quad \mbox{for all}\,\,\lambda,\rho\geq 0.
\end{equation}
Notice that the condition $(M.2)$ is not needed for the definition of the associated function \eqref{assoc-fun-sec} nor for the proof of \eqref{ine-for-seq-assoc}.\\
\indent Assume that the weight sequence $(M_p)_{p\in\NN}$ satisfies $(M.3)'$. Let $U\subseteq\RR^d$ be an open set and $K$ a regular compact subset of $U$ (i.e. $\overline{\operatorname{int}K}=K$). Then $\EE^{M_p,h}(K)$, $h>0$, is the space of all $\varphi\in \mathcal{C}^{\infty}(U)$ which satisfy
\begin{equation}\label{n-ult-dif-fun-whl}
\sup_{\alpha\in\NN^d}\sup_{x\in K} h^{|\alpha|}|D^{\alpha}\varphi(x)|/M_{\alpha}<\infty,
\end{equation}
and $\DD^{M_p,h}_K$, $h>0$, is the space of all $\varphi\in \mathcal{C}^{\infty}(\RR^d)$ with support in $K$ that satisfy \eqref{n-ult-dif-fun-whl}. Following Komatsu \cite{Komatsu1}, we define the locally convex spaces
$$
\EE^{\{M_p\}}(U)=\lim_{\substack{\longleftarrow\\ K\subset\subset U}}
\lim_{\substack{\longrightarrow\\ h\rightarrow 0^+}} \EE^{M_p,h}(K),\quad \DD^{\{M_p\}}(U)=\lim_{\substack{\longrightarrow\\ K\subset\subset U}}\lim_{\substack{\longrightarrow\\ h\rightarrow 0^+}} \DD^{M_p,h}_K.
$$
The spaces of ultradistributions and ultradistributions with compact support of Roumieu type are defined as the strong duals of $\DD^{\{M_p\}}(U)$ and $\EE^{\{M_p\}}(U)$ respectively; we refer to \cite{Komatsu1,Komatsu2,Komatsu3} for the properties of these spaces.\\
\indent Let $(M_p)_{p\in\NN}$ and $(A_p)_{p\in\NN}$ be two weight sequences which satisfy the following: there exists $\sigma\in(0,1)$ such that $p!^{\sigma}\subset M_p$ and $p!^{1-\sigma}\subset A_p$. For $h>0$, we denote by $\SSS^{M_p,h}_{A_p,h}(\RR^d)$ the Banach space of all $\varphi\in\mathcal{C}^{\infty}(\RR^d)$ which satisfy
$$
\sup_{\alpha\in \NN^d}h^{|\alpha|}\|e^{A(h|\cdot|)} D^{\alpha}\varphi\|_{L^{\infty}(\RR^d)} /M_{\alpha}<\infty.
$$
The space of tempered ultradifferentiable function of Roumieu type is defined by
$$
\SSS^{\{M_{p}\}}_{\{A_p\}}(\RR^d)=\lim_{\substack{\longrightarrow\\ h\rightarrow 0^+}} \SSS^{M_{p},h}_{A_p,h}(\RR^d).
$$
Its strong dual $\SSS'^{\{M_{p}\}}_{\{A_p\}}(\RR^d)$ is the spaces of tempered ultradistributions of Roumieu type. We refer to \cite{PilipovicK,and-jasson-lenny,and-jasson,PPV-JMPA} for their topological properties. When $M_p=A_p$, we will simply write $\SSS^{\{M_p\}}(\RR^d)$ and $\SSS'^{\{M_p\}}(\RR^d)$ instead of $\SSS^{\{M_p\}}_{\{M_p\}}(\RR^d)$ and $\SSS'^{\{M_p\}}_{\{M_p\}}(\RR^d)$ respectively. In this case, the Fourier transform is a topological isomorphism on $\SSS^{\{M_p\}}(\RR^d)$ and on $\SSS'^{\{M_p\}}(\RR^d)$.\\
\indent An entire function $P:\CC^d\rightarrow \CC$, $P(z)=\sum_{\alpha}c_{\alpha}z^{\alpha}$, is called an \textit{ultrapolynomial of class} $\{M_p\}$ if for every $L>0$ there is $C>0$ such that $|c_{\alpha}|\leq CL^{|\alpha|}/M_{\alpha}$, for all $\alpha\in\NN^d$. The corresponding operator $P(D)=\sum_{\alpha} c_{\alpha}D^{\alpha}$ is called an \textit{ultradifferential operator of class} $\{M_p\}$ and it acts continuously on $\SSS^{\{M_p\}}_{\{A_p\}}(\RR^d)$ and $\SSS'^{\{M_p\}}_{\{A_p\}}(\RR^d)$ as well as on $\EE^{\{M_p\}}(U)$, $\DD^{\{M_p\}}(U)$, $\EE'^{\{M_p\}}(U)$ and $\DD'^{\{M_p\}}(U)$.\\
\indent We denote by $\mathfrak{R}$ the set of all positive monotonically increasing sequences $(r_p)_{p\in\ZZ_+}$ such that $r_p\rightarrow \infty$, as $p\rightarrow \infty$. With the partial order relation $(r_p)\leq (k_p)$ if $r_p\leq k_p$, $\forall p\in\ZZ_+$, $(\mathfrak{R},\leq)$ becomes a directed set (both upwards and downwards directed). For $(r_p)\in\mathfrak{R}$, we denote by $N_{r_p}(\cdot)$ the associated function to the sequence $M_p\prod_{j=1}^pr_j$, $p\in\NN$;\footnote{Here and throughout the rest of the article we employ the principle of vacuous (empty) product, i.e. $\prod_{j=1}^0 r_j=1$.} although $M_p\prod_{j=1}^pr_j$, $p\in\NN$, may fail to satisfy $(M.2)$ we can still define its associated function as in \eqref{assoc-fun-sec} and \eqref{ine-for-seq-assoc} holds true for $e^{N_{r_p}(\cdot)}$ as well. Notice that for any $(r_p)\in\mathfrak{R}$ and $h>0$, $e^{N_{r_p}(\lambda)}e^{-M(h\lambda)}\rightarrow 0$, as $\lambda\rightarrow \infty$.\\
\indent A measurable function $f$ on $\RR^d$ is said to have \textit{ultrapolynomial growth of class} $\{M_p\}$ if $e^{-M(h|\cdot|)}f\in L^{\infty}(\RR^d)$, for all $h>0$. We will need the following fact \cite[Lemma 2.1]{PP3}: $f\in\mathcal{C}(\RR^d)$ is of ultrapolynomial growth of class $\{M_p\}$ if and only if
\begin{equation}\label{equ-for-con-equ-for-gro}
\exists (r_p)\in\mathfrak{R}\,\, \mbox{such that}\,\, e^{-N_{r_p}(|\cdot|)}f\in L^{\infty}(\RR^d).
\end{equation}

\subsection{$\DD'^{\{M_p\}}$-convolution of ultradistributions}\label{ultradis-con-def}

Let $(M_p)_{p\in\NN}$ be a weight sequence that satisfies $(M.3)$. For $h>0$, we denote by $\DD^{M_p,h}_{L^{\infty}}(\RR^d)$ the space of all $\varphi\in\mathcal{C}^{\infty}(\RR^d)$ which satisfy $\sup_{\alpha\in\NN^d} h^{|\alpha|}\|D^{\alpha}\varphi\|_{L^{\infty}(\RR^d)}/M_{\alpha}<\infty$. The locally convex space $\DD^{\{M_p\}}_{L^{\infty}}(\RR^d)$ is defined as follows
$$
\DD^{\{M_p\}}_{L^{\infty}}(\RR^d)=\lim_{\substack{\longrightarrow\\ h\rightarrow 0^+}} \DD^{M_p,h}_{L^{\infty}};
$$
it is a complete barrelled and bornological space (see \cite[Section 7]{D-P-P-V}). We denote by $\dot{\mathcal{B}}^{\{M_p\}}(\RR^d)$ the closure of $\DD^{\{M_p\}}(\RR^d)$ in $\DD^{\{M_p\}}_{L^{\infty}}(\RR^d)$ (or equivalently, the closure of $\SSS^{\{M_p\}}(\RR^d)$ in $\DD^{\{M_p\}}_{L^{\infty}}(\RR^d)$); $\dot{\mathcal{B}}^{\{M_p\}}(\RR^d)$ is again complete barrelled and bornological space (see \cite[Section 7]{D-P-P-V}). The strong dual of $\dot{\mathcal{B}}^{\{M_p\}}(\RR^d)$ is denoted by $\DD'^{\{M_p\}}_{L^1}(\RR^d)$ and the strong dual of the latter (i.e. the strong bidual of $\dot{\mathcal{B}}^{\{M_p\}}(\RR^d)$) is exactly $\DD^{\{M_p\}}_{L^{\infty}}(\RR^d)$; see \cite[Theorem 7.3]{D-P-P-V}. We introduce a weaker locally convex topology on $\DD^{\{M_p\}}_{L^{\infty}}(\RR^d)$ with the following family of seminorms:
$$
\sup_{\alpha\in\NN^d}\frac{\|gD^{\alpha}\varphi\|_{L^{\infty}(\RR^d)}} {M_{\alpha}\prod_{j=1}^{|\alpha|}r_j},\quad (r_p)\in\mathfrak{R},\,\, g\in\mathcal{C}_0(\RR^d)\,\, \mbox{satisfying}\,\, g(x)>0,\, \forall x\in\RR^d,
$$
and we denote by $\DD^{\{M_p\}}_{L^{\infty},c}(\RR^d)$ the space $\DD^{\{M_p\}}_{L^{\infty}}(\RR^d)$ equipped with this topology. Then, $\DD^{\{M_p\}}_{L^{\infty},c}(\RR^d)$ is a complete space, $\DD^{\{M_p\}}(\RR^d)$ is dense in it and the strong dual of $\DD^{\{M_p\}}_{L^{\infty},c}(\RR^d)$ is topologically isomorphic to $\DD'^{\{M_p\}}_{L^1}(\RR^d)$ (see \cite[Section 5]{PPV-JMPA}).\\
\indent Given $f,g\in\DD'^{\{M_p\}}(\RR^d)$, we say that the $\DD'^{\{M_p\}}$-convolution of $f$ and $g$ exists if $(f(x)\otimes g(y))\varphi(x+y)\in\DD'^{\{M_p\}}_{L^1}(\RR^{2d})$ for all $\varphi\in\DD^{\{M_p\}}(\RR^d)$; in this case $f*g$ is defined by\footnote{The definition is the natural ultradistributional analogue to the definition od $\DD'$-convolution of distributions as introduced by Schwartz.}
$$
\langle f*g,\varphi\rangle:={}_{\DD'^{\{M_p\}}_{L^1}(\RR^{2d})}\langle (f(x)\otimes g(y))\varphi(x+y),1_{x,y}\rangle_{\DD^{\{M_p\}}_{L^{\infty},c}(\RR^{2d})},\quad \varphi\in\DD^{\{M_p\}}(\RR^d),
$$
where $1_{x,y}$ is the function that is identically equal to $1$ on $\RR^{2d}$. If one (or both) of $f$ and $g$ is with compact support, the $\DD'^{\{M_p\}}$-convolution of $f$ and $g$ coincides with the ordinary convolution as defined in \cite{Komatsu1}. We recall the following result which provides equivalent conditions for the existence of $\DD'^{\{M_p\}}$-convolution of two ultradistributions, which, in  practice, are far easier to work with.

\begin{theorem}[{\cite[Theorem 8.2]{D-P-P-V}}]\label{dcon-exi-con}
Let $f,g\in\DD'^{\{M_p\}}(\RR^d)$. The following conditions are equivalent.
\begin{itemize}
\item[$(i)$] The $\DD'^{\{M_p\}}$-convolution of $f$ and $g$ exists.
\item[$(ii)$] For all $\varphi\in\DD^{\{M_p\}}(\RR^d)$, $(\varphi*\check{f})g\in\DD'^{\{M_p\}}_{L^1}(\RR^d)$.
\item[$(iii)$] For all $\varphi\in\DD^{\{M_p\}}(\RR^d)$, $(\varphi*\check{g})f\in\DD'^{\{M_p\}}_{L^1}(\RR^d)$.
\end{itemize}
\end{theorem}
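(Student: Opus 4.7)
The plan is to prove this as the ultradistributional analogue of Schwartz's classical convolvability criterion, reducing to showing (i)$\Leftrightarrow$(ii); the equivalence (i)$\Leftrightarrow$(iii) then follows by noting that the condition in (i) is invariant under the swap $(x,y)\leftrightarrow(y,x)$, which exchanges the roles of $f$ and $g$. Throughout, the key structural observation is that the ultradistribution $T_\varphi:=(f(x)\otimes g(y))\varphi(x+y)$ is supported in the strip $\{(x,y):x+y\in\supp\varphi\}$, on which $|x|\to\infty$ forces $|y|\to\infty$ and vice versa.

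For (i)$\Rightarrow$(ii), I would fix $\varphi\in\DD^{\{M_p\}}(\RR^d)$, assume $T_\varphi\in\DD'^{\{M_p\}}_{L^1}(\RR^{2d})$, and choose a cutoff $\chi\in\DD^{\{M_p\}}(\RR^d)$ with $\chi\equiv 1$ on a neighbourhood of $\supp\varphi$. Given $\psi\in\dot{\mathcal{B}}^{\{M_p\}}(\RR^d)$, set $\Psi_\psi(x,y):=\chi(x+y)\psi(y)$ and verify that $\Psi_\psi\in\dot{\mathcal{B}}^{\{M_p\}}(\RR^{2d})$: smoothness and ultradifferentiable bounds follow from Leibniz combined with $(M.2)$, while vanishing at infinity on $\RR^{2d}$ uses that $\chi(x+y)$ forces $|x+y|$ bounded, so that $(x,y)\to\infty$ in $\RR^{2d}$ implies $|y|\to\infty$ on the support of $\Psi_\psi$. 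The map $\psi\mapsto\Psi_\psi$ is continuous from $\dot{\mathcal{B}}^{\{M_p\}}(\RR^d)$ into $\dot{\mathcal{B}}^{\{M_p\}}(\RR^{2d})$, hence
\[
S_\varphi(\psi):=\langle T_\varphi,\Psi_\psi\rangle
\]
defines an element of $\DD'^{\{M_p\}}_{L^1}(\RR^d)$; independence from the choice of $\chi$ is clear from $\chi(x+y)\varphi(x+y)=\varphi(x+y)$. For $\psi\in\DD^{\{M_p\}}(\RR^d)$, a direct unfolding using the tensor product structure and the regularity $\varphi*\check f\in\EE^{\{M_p\}}(\RR^d)$ shows $S_\varphi(\psi)=\langle(\varphi*\check f)g,\psi\rangle$, and density of $\DD^{\{M_p\}}(\RR^d)$ in $\dot{\mathcal{B}}^{\{M_p\}}(\RR^d)$ identifies $S_\varphi$ with $(\varphi*\check f)g$.

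For (ii)$\Rightarrow$(i) I would reverse the construction, and this is where I expect the main difficulty. Using that the strong dual of the weaker topology $\DD^{\{M_p\}}_{L^\infty,c}(\RR^{2d})$ is again $\DD'^{\{M_p\}}_{L^1}(\RR^{2d})$ and that $\DD^{\{M_p\}}(\RR^{2d})$ is dense in $\DD^{\{M_p\}}_{L^\infty,c}(\RR^{2d})$, it suffices to build a continuous extension of $T_\varphi$ on test functions. Given $\Phi\in\DD^{\{M_p\}}(\RR^{2d})$, the function
\[
\theta_\Phi(y):=\langle f(x),\varphi(x+y)\Phi(x,y)\rangle
\]
is well-defined since $\varphi(x+y)\Phi(x,y)$ has compact support in $x$ for each $y$. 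I would show $\theta_\Phi\in\dot{\mathcal{B}}^{\{M_p\}}(\RR^d)$ with continuous dependence on $\Phi$ in the $\DD^{\{M_p\}}_{L^\infty,c}$-topology, and then set $\langle T_\varphi,\Phi\rangle:=\langle(\varphi*\check f)g,\theta_\Phi\rangle$, extending by density.

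The hard part is exactly this quantitative control of $\theta_\Phi$: one must verify, for every $(r_p)\in\mathfrak{R}$ and every continuous $g\in\mathcal{C}_0(\RR^d)$ with $g>0$, an estimate of the form
\[
\sup_{\alpha\in\NN^d}\frac{\|g\,D^\alpha\theta_\Phi\|_{L^\infty(\RR^d)}}{M_\alpha\prod_{j=1}^{|\alpha|}r_j}\leq C\cdot(\text{seminorm of }\Phi\text{ in }\DD^{\{M_p\}}_{L^\infty,c}(\RR^{2d})).
\]
This requires translating the continuity of $f$ as an element of $\DD'^{\{M_p\}}$ into a uniform estimate as the test function $x\mapsto\varphi(x+y)\Phi(x,y)$ varies over the compact set $y\in\supp g\cup\{|y|\leq R\}$, and then using $(M.2)$ together with the Roumieu-type quantifier $(r_p)\in\mathfrak{R}$ to absorb Leibniz-type contributions from differentiation in $y$. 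Once this is in place, the density/continuity argument closes the equivalence, and uniqueness of the extension gives that $T_\varphi$ indeed lies in $\DD'^{\{M_p\}}_{L^1}(\RR^{2d})$.
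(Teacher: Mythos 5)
First, note that the paper does not prove this statement at all: it is quoted verbatim from \cite[Theorem 8.2]{D-P-P-V} and used as a black box, so there is no internal proof to compare yours against. Judged on its own terms, your reduction of $(i)\Leftrightarrow(iii)$ to $(i)\Leftrightarrow(ii)$ via the symmetry $(x,y)\mapsto(y,x)$ is fine, and your argument for $(i)\Rightarrow(ii)$ is essentially the standard one and works: $\Psi_\psi(x,y)=\chi(x+y)\psi(y)$ does lie in $\dot{\mathcal{B}}^{\{M_p\}}(\RR^{2d})$ (most cleanly seen by approximating $\psi$ by elements of $\DD^{\{M_p\}}(\RR^d)$ and using that multiplication by the fixed function $\chi(x+y)\in\DD^{\{M_p\}}_{L^{\infty}}(\RR^{2d})$ is continuous, rather than via a ``vanishing at infinity'' characterisation, which is delicate in the Roumieu setting), and restricting the resulting functional to $\DD^{\{M_p\}}(\RR^d)$ identifies it with $(\varphi*\check{f})g$.

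The direction $(ii)\Rightarrow(i)$, however, has a genuine gap, and it is not merely the quantitative estimate you flag. With $\theta_\Phi(y)=\langle f(x),\varphi(x+y)\Phi(x,y)\rangle$ the correct identity is $\langle(f(x)\otimes g(y))\varphi(x+y),\Phi\rangle=\langle g,\theta_\Phi\rangle$; your proposed definition $\langle T_\varphi,\Phi\rangle:=\langle(\varphi*\check{f})g,\theta_\Phi\rangle$ applies $f$ twice and is not the functional you need to extend. More importantly, even after this correction the strategy cannot close: hypothesis $(ii)$ gives continuity of $\psi\mapsto\langle g,(\varphi*\check{f})\psi\rangle$ on $\dot{\mathcal{B}}^{\{M_p\}}(\RR^d)$ only, i.e.\ it controls $g$ against test functions divisible by $\varphi*\check{f}$, whereas $\theta_\Phi$ for a general $\Phi\in\DD^{\{M_p\}}(\RR^{2d})$ does not factor as $(\varphi*\check{f})\cdot\psi_\Phi$ (for instance, $\varphi*\check{f}$ may vanish where $\theta_\Phi$ does not). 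Consequently, proving that $\Phi\mapsto\theta_\Phi$ is continuous into $\DD^{\{M_p\}}_{L^{\infty},c}(\RR^d)$ --- the estimate you identify as the hard part --- would still leave you pairing the bare ultradistribution $g$, which need not belong to $\DD'^{\{M_p\}}_{L^1}(\RR^d)$, against $\theta_\Phi$. Passing from the ``one-variable'' information in $(ii)$ to integrability of the tensor product on the strip is precisely the substance of \cite[Theorem 8.2]{D-P-P-V}; the known proofs rely on additional machinery --- the parametrix method for ultradifferential operators of class $\{M_p\}$ (this is where $(M.3)$ enters), the structure theorems for $\DD'^{\{M_p\}}_{L^1}$, and intermediate equivalent conditions such as $(\varphi*\check{f})(\psi*\check{g})\in L^1(\RR^d)$ for all $\varphi,\psi\in\DD^{\{M_p\}}(\RR^d)$ --- none of which appear in your sketch.
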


\begin{remark}\label{rem-for-exi-con-alt-def-cor}
If the $\DD'^{\{M_p\}}$-convolution of $f$ and $g$ exists (i.e., if they satisfy the equivalent conditions in Theorem \ref{dcon-exi-con}), then
$$
\langle f*g,\varphi\rangle={}_{\DD'^{\{M_p\}}_{L^1}(\RR^d)}\langle (\varphi*\check{f})g,1\rangle_{\DD^{\{M_p\}}_{L^{\infty},c}(\RR^d)}= {}_{\DD'^{\{M_p\}}_{L^1}(\RR^d)}\langle (\varphi*\check{g})f,1\rangle_{\DD^{\{M_p\}}_{L^{\infty},c}(\RR^d)},\quad \varphi\in\DD^{\{M_p\}}(\RR^d).
$$
We only prove the first identity as the proof of the second is analogous. Pick $\chi\in\DD^{\{M_p\}}(\RR^d)$ such that $0\leq \chi\leq 1$, $\chi(x)=1$ when $|x|\leq 1$ and $\chi(x)=0$ when $|x|\geq 2$. For each $n\in\ZZ_+$, set $\chi_n(x):=\chi(x/n)$. Let $\varphi\in\DD^{\{M_p\}}(\RR^d)$ be arbitrary but fixed. Pick $n_0\in\ZZ_+$ such that $\supp\varphi\subseteq \{x\in\RR^d\,|\, |x|\leq n_0\}$; thus $\chi_n=1$ on $\supp\varphi$ for all $n\geq n_0$. For $n\geq n_0$, we have
\begin{equation}\label{equ-for-con-exi-alt-dfk}
\langle (\varphi*\check{f})g,\chi_n\rangle=\langle (f(x)\otimes g(y))\varphi(x+y),\chi_{n}(x+y)\chi_n(y)\rangle.
\end{equation}
Notice that $\chi_n\rightarrow 1_x$ in $\DD^{\{M_p\}}_{L^{\infty},c}(\RR^d)$ and $\chi_n(x+y)\chi_n(y)\rightarrow 1_{x,y}$ in $\DD^{\{M_p\}}_{L^{\infty},c}(\RR^{2d})$. Hence, the left-hand side of \eqref{equ-for-con-exi-alt-dfk} tends to $\langle (\varphi*\check{f})g,1\rangle$ while the right-hand side tends to $\langle (f(x)\otimes g(y))\varphi(x+y),1_{x,y}\rangle=\langle f*g,\varphi\rangle$ and the proof is complete.
\end{remark}

\subsection{The short-time Fourier transform and the Wigner transform}\label{stft-sub-con-wig}

Let $(M_p)_{p\in\NN}$ be a weight sequence which satisfies $p!^{1/2}\subset M_p$. The short-time Fourier transform (from now on, abbreviated as STFT) of $f\in\SSS'^{\{M_p\}}(\RR^d)$ with window $\varphi\in\SSS^{\{M_p\}}(\RR^d)$ is defined by
\begin{equation}\label{stft1}
V_{\varphi}f(x,\xi):=\langle f,e^{-2\pi i \xi\, \cdot}\overline{\varphi(\cdot-x)}\rangle,\quad x,\xi\in\RR^d;
\end{equation}
$V_{\varphi}f$ is a continuous function on $\RR^{2d}$ with ultrapolynomial growth of class $\{M_p\}$ and the mapping $f\mapsto V_{\varphi}f$, $\SSS'^{\{M_p\}}(\RR^d)\rightarrow \SSS'^{\{M_p\}}(\RR^{2d})$, is continuous and it restricts to a continuous mapping $\psi\mapsto V_{\varphi}\psi$, $\SSS^{\{M_p\}}(\RR^d)\rightarrow \SSS^{\{M_p\}}(\RR^{2d})$ (see \cite[Proposition 2.8 and Proposition 2.9]{and-jasson}). Its adjoint $V^*_{\varphi}:\SSS^{\{M_p\}}(\RR^{2d})\rightarrow \SSS^{\{M_p\}}(\RR^d)$ is given by
$$
V^*_{\varphi}\Psi(x)=\int_{\RR^{2d}}\Psi(y,\eta) \varphi(x-y)e^{2\pi i x\eta}dyd\eta,\quad \Psi\in\SSS^{\{M_p\}}(\RR^{2d}).
$$
It extends to a continuous mapping $V^*_{\varphi}:\SSS'^{\{M_p\}}(\RR^{2d})\rightarrow \SSS'^{\{M_p\}}(\RR^d)$ (see \cite[Proposition 2.8 and Proposition 2.9]{and-jasson}) and $V^*_{\psi}V_{\varphi}=(\psi,\varphi)_{L^2}\operatorname{Id}_{\SSS'^{\{M_p\}}(\RR^d)}$. If both $\varphi$ and $\psi$ are in $L^2(\RR^d)$ then $V_{\varphi}\psi$ can still be defined by \eqref{stft1} and in this case $\psi\mapsto V_{\varphi}\psi$, $L^2(\RR^d)\rightarrow L^2(\RR^{2d})$, is well-defined and continuous.\\
\indent Recall that the Wigner transform $W(f,g)$ of $f,g\in L^2(\RR^d)$ is given by
$$
W(f,g)(x,\xi)=\int_{\RR^d}e^{-2\pi i y\xi}
f(x+y/2)\overline{g(x-y/2)}dy,\quad x,\xi\in\RR^d;
$$
$W(f,g)\in L^2(\RR^{2d})$ and, in fact, $W(f,g)(x,\xi)=2^d e^{4\pi i x\xi}V_{\check{g}}f(2x,2\xi)$, $x,\xi\in\RR^d$ (see \cite[Lemma 4.3.1, p. 64]{G}). Consequently, $W(\varphi,\psi)\in \SSS^{\{M_p\}}(\RR^{2d})$ when $\varphi,\psi\in\SSS^{\{M_p\}}(\RR^d)$.\\
\indent Given $a\in\SSS'^{\{M_p\}}(\RR^{2d})$, we denote by $a^w$ the Weyl quantisation of $a$, i.e. the pseudo-differential operator $a^w:\SSS^{\{M_p\}}(\RR^d)\rightarrow \SSS'^{\{M_p\}}(\RR^d)$ defined by
$$
\langle a^w\psi,\theta\rangle=\langle \mathcal{F}^{-1}_{\xi\rightarrow x-y} a((x+y)/2,\xi),\theta(x)\otimes \psi(y)\rangle,\quad \psi,\theta\in\SSS^{\{M_p\}}(\RR^d).
$$
We recall the following important identity
$$
\langle a^w\psi,\theta\rangle=\langle a, W(\psi,\overline{\theta})\rangle,\quad \psi,\theta\in\SSS^{\{M_p\}}(\RR^d),\, a\in\SSS'^{\{M_p\}}(\RR^{2d}).
$$

\section{Bounds on the Wigner transform of ultradifferentiable functions with super-exponential decay}\label{wigner-tra-ult-dif-rdc}

Throughout the section, we will need the following inequalities:
\begin{gather}
|x|^{k_1} |y|^{k_2}\leq 2^{k_1+k_2}(|x-y/2|^{k_1+k_2}+|x+y/2|^{k_1+k_2}),\quad x,y\in\RR^d,\, k_1,k_2\in\NN;\label{ine-for-prod-abs} \\
e^{-r\langle x-y/2\rangle^q-r\langle x+y/2\rangle^q}\leq e^{-2r\langle x\rangle^q},\quad  x,y\in\mathbb R^d,\, q\geq 1,\, r\geq 0.\label{x-t2}
\end{gather}
We only prove the second as the proof of the first inequality is straightforward. Since the function $\rho\mapsto(1+\rho^2)^{q/2}$, $[0,\infty)\rightarrow [1,\infty)$, is convex and monotonically increasing when $q\geq 1$, the validity of \eqref{x-t2} follows from
$$
\frac{\langle x-y/2\rangle^q+\langle x+y/2\rangle^q}{2}\geq \left\langle \frac{|x-y/2|+|x+y/2|}{2}\right\rangle^q\geq \langle |x|\rangle^q=\langle x\rangle^q.
$$

\begin{proposition}\label{ex1}
Let $(M_p)_{p\in\NN}$ be a weight sequence that satisfies $p!\subset M_p$ and let $q\geq 1$ and $r>0$. Let $\varphi,\psi\in\mathcal{C}^{\infty}(\RR^d)$ satisfy the following: for every $r'\in(0,r)$ there is $h'>0$ such that
\begin{equation}\label{equ-for-est-for-res}
\sup_{\alpha\in\NN^d}\frac{h'^{|\alpha|}\|e^{r'\langle \cdot\rangle^q}\partial^{\alpha}\varphi\|_{L^{\infty}(\RR^d)}}{M_{\alpha}}<\infty\quad \mbox{and}\quad \sup_{\alpha\in\NN^d}\frac{h'^{|\alpha|}\|e^{r'\langle \cdot\rangle^q}\partial^{\alpha}\psi\|_{L^{\infty}(\RR^d)}}{M_{\alpha}}<\infty.
\end{equation}
Then, for every $r'\in(0,r)$ there is $h'>0$ such that
\begin{equation}\label{equ-claim-for-est-wigner-doub}
\sup_{\alpha,\beta,\gamma,\delta\in\NN^d} \sup_{x,\xi\in\RR^d}\frac{h'^{|\alpha+\beta+\gamma+\delta|}|x^{\gamma}\xi^{\delta} \partial^{\alpha}_x\partial^{\beta}_{\xi} W(\varphi,\psi)(x,\xi)|e^{2r'\langle x\rangle^q}}{M_{\alpha}\beta!^{1/q}\gamma!^{1/q}M_{\delta}}<\infty.
\end{equation}
Furthermore, for every $r'\in(0,r)$ there are $c',h'>0$ such that
\begin{equation}\label{inequal-sec-est0-wigner}
\sup_{\alpha,\beta\in\NN^d} \sup_{x,\xi\in\RR^d}\frac{h'^{|\alpha+\beta|} |\partial^{\alpha}_x\partial^{\beta}_{\xi} W(\varphi,\psi)(x,\xi)|e^{2r'\langle x\rangle^q}e^{M(c'|\xi|)}}{M_{\alpha}\beta!^{1/q}}<\infty.
\end{equation}
\end{proposition}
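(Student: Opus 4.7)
The plan is to establish \eqref{equ-claim-for-est-wigner-doub} by a direct integration-by-parts argument and then deduce \eqref{inequal-sec-est0-wigner} as an immediate corollary. First I would differentiate $W(\varphi,\psi)$ under the integral sign: $\partial_\xi^\beta$ produces a factor $(-2\pi iy)^\beta$, while Leibniz applied to $\partial_x^\alpha$ on the product $\varphi(x+y/2)\overline{\psi(x-y/2)}$ yields a sum over $\alpha_1+\alpha_2=\alpha$. To handle the factor $\xi^\delta$, I would integrate by parts $|\delta|$ times via the identity $\xi^\delta e^{-2\pi iy\xi}=(i/(2\pi))^{|\delta|}\partial_y^\delta e^{-2\pi iy\xi}$, and then apply a second Leibniz expansion to distribute $\partial_y^\delta$ over $y^\beta\cdot\varphi^{(\alpha_1)}(x+y/2)\cdot\overline{\psi^{(\alpha_2)}(x-y/2)}$. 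This produces factors $y^{\beta-\delta_1}$ (from $\partial_y^{\delta_1}y^\beta$, forcing $\delta_1\leq\beta$), $(\partial^{\alpha_1+\delta_2}\varphi)(x+y/2)$, and $\overline{(\partial^{\alpha_2+\delta_3}\psi)(x-y/2)}$, with harmless constants $2^{-|\delta_2+\delta_3|}$ arising from the chain rule.

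The core of the proof is the pointwise estimation of the resulting integrand. I would fix $r''\in(r',r)$ and invoke the hypothesis \eqref{equ-for-est-for-res} at level $r''$. Using $(M.2)$ to factor $M_{\alpha_i+\delta_j}\leq c_0H^{|\alpha_i|+|\delta_j|}M_{\alpha_i}M_{\delta_j}$, together with the log-convexity of $M_p$ (which gives $M_{\alpha_1}M_{\alpha_2}\leq M_\alpha$ and $M_{\delta_2}M_{\delta_3}\leq M_\delta$), the contribution of the windows collapses into $(H/h'')^{|\alpha|+|\delta|}M_\alpha M_\delta\,e^{-r''\langle u\rangle^q-r''\langle v\rangle^q}$, where $u=x+y/2$ and $v=x-y/2$. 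Writing $r''=r'+\epsilon$, I would apply \eqref{x-t2} on the piece $e^{-(r'+\epsilon/2)\langle u\rangle^q-(r'+\epsilon/2)\langle v\rangle^q}\leq e^{-2r'\langle x\rangle^q}$ to produce the target decay in $x$, reserving the residual $e^{-(\epsilon/2)\langle u\rangle^q-(\epsilon/2)\langle v\rangle^q}$ to absorb the polynomial factors.

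The polynomial factors $|x|^{|\gamma|}$ and $|y|^{|\beta-\delta_1|}$ would be controlled via \eqref{ine-for-prod-abs} in terms of $\langle u\rangle^{|\gamma|+|\beta-\delta_1|}$ and $\langle v\rangle^{|\gamma|+|\beta-\delta_1|}$, and the elementary bound $\sup_{t\geq 0}t^k e^{-ct^q}\leq C(k!)^{1/q}(cq)^{-k/q}$ (valid since $q\geq 1$) converts each such power against the residual decay into a factor of the form $L^k(k!)^{1/q}$. Using $|\gamma|!\leq d^{|\gamma|}\gamma!$ and $|\beta-\delta_1|!\leq|\beta|!\leq d^{|\beta|}\beta!$, the combined polynomial contribution is dominated by $C^{|\gamma+\beta|}\gamma!^{1/q}\beta!^{1/q}$. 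What remains is $y$-integrable uniformly in $x$; collecting all combinatorial constants (Leibniz coefficients, factors of $2\pi$, powers of $2$) into a single factor $C^{|\alpha+\beta+\gamma+\delta|}$ and choosing $h'$ small enough then yields \eqref{equ-claim-for-est-wigner-doub}.

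For \eqref{inequal-sec-est0-wigner}, I would set $\gamma=0$ in \eqref{equ-claim-for-est-wigner-doub} and combine $|\xi|^p\leq d^{p/2}\max_{|\delta|=p}|\xi^\delta|$ with the identity $e^{M(c'|\xi|)}=\sup_{p\in\NN}(c'|\xi|)^p/M_p$. Each $p$-term then satisfies
\begin{equation*}
\frac{(c'|\xi|)^p}{M_p}\bigl|\partial_x^\alpha\partial_\xi^\beta W(\varphi,\psi)(x,\xi)\bigr|\leq C\bigl(c'\sqrt{d}/h'\bigr)^p h'^{-|\alpha+\beta|}M_\alpha\beta!^{1/q}e^{-2r'\langle x\rangle^q},
\end{equation*}
and the choice $c'=h'/\sqrt{d}$ eliminates the $p$-dependence, so taking supremum in $p$ produces \eqref{inequal-sec-est0-wigner}. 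The main obstacle is the combinatorial bookkeeping across the two Leibniz expansions and the polynomial-to-exponential conversion: the asymmetric form $M_\alpha\beta!^{1/q}\gamma!^{1/q}M_\delta$ in the statement is forced by the proof, since only the indices $\alpha$ and $\delta$ coming directly from derivatives of $\varphi,\psi$ are controlled by $M_p$, while $\beta$ and $\gamma$, arising from polynomial factors absorbed into the sub-exponential decay, yield only the weaker Gevrey $p!^{1/q}$-bounds.
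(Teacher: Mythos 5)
Your argument follows essentially the same route as the paper's proof: differentiate $W(\varphi,\psi)$ under the integral, convert $\xi^\delta$ into $\partial_y^\delta$ by integration by parts, apply Leibniz twice, invoke the hypothesis at an intermediate decay level $r''\in(r',r)$, split the resulting Gaussian-type decay via \eqref{x-t2} into the target factor $e^{-2r'\langle x\rangle^q}$ plus a residual used to absorb the powers $|x|^{|\gamma|}|y|^{|\beta-\delta_1|}$ through $\sup_t t^k e^{-ct^q}\lesssim c^{-k/q}k!^{1/q}$, and finally pass from \eqref{equ-claim-for-est-wigner-doub} to \eqref{inequal-sec-est0-wigner} by trading $\xi^\delta/M_\delta$ for $e^{M(c'|\xi|)}$. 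All of this matches the paper, and your closing remark about why the asymmetric weight $M_\alpha\beta!^{1/q}\gamma!^{1/q}M_\delta$ is forced is accurate.

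There is, however, one step that fails as written. When $\partial_y^{\delta_1}$ hits $y^\beta$ it produces not just $y^{\beta-\delta_1}$ but also the coefficient $\beta!/(\beta-\delta_1)!=\binom{\beta}{\delta_1}\delta_1!$, and $\delta_1!$ is super-exponential in $|\delta_1|$; it cannot be swept into the factor $C^{|\alpha+\beta+\gamma+\delta|}$ along with the Leibniz coefficients and powers of $2\pi$. Tellingly, your proof never uses the hypothesis $p!\subset M_p$, and this is exactly where it is needed: one must write $\delta_1!\leq C''L^{|\delta_1|}M_{\delta_1}$ and pair this with the slack you discarded when bounding $M_{\delta_2}M_{\delta_3}\leq M_{\delta-\delta_1}\leq M_\delta$, namely $M_{\delta-\delta_1}M_{\delta_1}\leq M_\delta$ by log-convexity (this is precisely how the paper absorbs the term it calls $\delta'''!$). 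With that repair the bookkeeping closes and your proof is correct.
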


\begin{proof} The bound \eqref{inequal-sec-est0-wigner} is equivalent to \eqref{equ-claim-for-est-wigner-doub}. Indeed, \eqref{equ-claim-for-est-wigner-doub} gives
$$
\frac{(h'/2)^{|\alpha+\beta+\gamma+\delta|}|x^{\gamma}\xi^{\delta} \partial^{\alpha}_x\partial^{\beta}_{\xi} W(\varphi,\psi)(x,\xi)|e^{2r'\langle x\rangle^q}}{M_{\alpha}!\beta!^{1/q}\gamma!^{1/q}M_{\delta}}\leq \frac{C}{2^{|\gamma|+|\delta|}}.
$$
Summing these inequalities over $\gamma,\delta\in\NN^d$ gives \eqref{inequal-sec-est0-wigner} since (recall $|\kappa|!\leq d^{|\kappa|}\kappa!$, $\forall \kappa\in\NN^d$)
$$
\sum_{\delta\in\NN^d}\frac{h'^{|\delta|}|\xi^{\delta}|}{2^{|\delta|}M_{\delta}}\geq \sum_{k=0}^{\infty}\frac{h'^k}{2^kd^kM_k}\sum_{|\delta|=k}\frac{k!}{\delta!}|\xi^{\delta}| \geq \sum_{k=0}^{\infty}\frac{h'^k|\xi|^k}{2^kd^kM_k}\geq e^{M(h'|\xi|/(2d))}.
$$
Clearly, \eqref{inequal-sec-est0-wigner} implies \eqref{equ-claim-for-est-wigner-doub} with $r'-\varepsilon$ in place of $r'$ for arbitrary small $\varepsilon>0$.\\
\indent To prove \eqref{equ-claim-for-est-wigner-doub}, let $\tilde{r}\in(0,r)$ be arbitrary but fixed and let $h'>0$ be such that \eqref{equ-for-est-for-res} holds true with $r':=(r+\tilde{r})/2\in(0,r)$. We estimate as follows
\begin{align}
|x^{\gamma}\xi^{\delta} &\partial^{\alpha}_x\partial^{\beta}_{\xi} W(\varphi,\psi)(x,\xi)|\nonumber\\
&= (2\pi)^{|\beta|-|\delta|} \sum_{\alpha'+\alpha''=\alpha}{\alpha\choose\alpha'} \sum_{\substack{\delta'+\delta''+\delta'''=\delta\\ \delta'''\leq \beta}} {\delta\choose {\delta',\delta'',\delta'''}}\frac{\beta!}{(\beta-\delta''')!}\nonumber\\
&\quad\cdot\left|\int_{\RR^d}x^{\gamma}e^{-2\pi i y\xi} y^{\beta-\delta'''}\partial^{\delta'}_y\partial^{\alpha'}_x(\varphi(x+y/2)) \partial^{\delta''}_y\partial^{\alpha''}_x(\overline{\psi(x-y/2)})dy\right|\nonumber\\
&\leq C_1 2^{|\alpha|}(4\pi)^{|\beta|} \sum_{\substack{\delta'+\delta''+\delta'''=\delta\\ \delta'''\leq \beta}} {\delta\choose {\delta',\delta'',\delta'''}}\delta'''! h'^{-|\alpha|-|\delta|+|\delta'''|} M_{\alpha+\delta-\delta'''}\nonumber\\
&\quad\cdot\int_{\RR^d}|x|^{|\gamma|} |y|^{|\beta-\delta'''|}e^{-r'\langle x+y/2\rangle^q}e^{-r'\langle x-y/2\rangle^q}dy.\label{est-der-wig-for-first-psk}
\end{align}
Set $r_1:=\min\{1,(r-\tilde{r})/2\}>0$. In view of \eqref{ine-for-prod-abs}, \eqref{x-t2} and the inequality $\lambda^k\leq k!^{1/q}e^{\lambda^q/q}$, $\lambda\geq 0$, $k\in\NN$, we infer
\begin{align*}
|x|^{|\gamma|} &|y|^{|\beta-\delta'''|}e^{-r'\langle x+y/2\rangle^q}e^{-r'\langle x-y/2\rangle^q}\\
&\leq 2^{|\gamma+\beta-\delta'''|} e^{-2\tilde{r}\langle x\rangle^q} (|x+y/2|^{|\gamma+\beta-\delta'''|}e^{-r_1\langle x+y/2\rangle^q} +|x-y/2|^{|\gamma+\beta-\delta'''|}e^{-r_1\langle x-y/2\rangle^q})\\
&\leq 2^{|\gamma+\beta-\delta'''|} e^{-2\tilde{r}\langle x\rangle^q} (2/r_1)^{|\gamma+\beta-\delta'''|}|\gamma+\beta-\delta'''|!^{1/q} (e^{-\frac{r_1}{2}\langle x+y/2\rangle^q} +e^{-\frac{r_1}{2}\langle x-y/2\rangle^q}).
\end{align*}
As $|\kappa|!\leq d^{|\kappa|}\kappa!$, $\forall \kappa\in\NN^d$, and $p!\leq C'' L^pM_p$ for some $C'',L\geq 1$ (since $p!\subset M_p$), we deduce
\begin{align}
|x^{\gamma}&\xi^{\delta} \partial^{\alpha}_x\partial^{\beta}_{\xi} W(\varphi,\psi)(x,\xi)|\nonumber\\
&\leq C_2e^{-2\tilde{r}\langle x\rangle^q} (4\pi)^{|\beta|}2^{|\alpha|+|\beta|+|\gamma|}d^{|\beta|+|\gamma|}  h'^{-|\alpha|-|\delta|}(2/r_1)^{|\gamma|+|\beta|}M_{\alpha+\delta}\nonumber\\
&\quad \cdot\sum_{\substack{\delta'+\delta''+\delta'''=\delta\\ \delta'''\leq \beta}} {\delta\choose {\delta',\delta'',\delta'''}} 4^{-|\delta'''|}(r_1h'L)^{|\delta'''|} (\gamma+\beta-\delta''')!^{1/q}\label{est-for-c-add-ass}\\
&\leq C_3 e^{-2\tilde{r}\langle x\rangle^q} (4\pi)^{|\beta|}2^{|\alpha|+2|\beta|+2|\gamma|}(H/h')^{|\alpha|+|\delta|} (2d/r_1)^{|\gamma|+|\beta|}(2+r_1h'L/4)^{|\delta|} M_{\alpha}\beta!^{1/q}\gamma!^{1/q}M_{\delta},\label{est-for-con-add-assump-on-seq}
\end{align}
which implies the validity of \eqref{equ-claim-for-est-wigner-doub}.
\end{proof}

\begin{remark}
The above proposition is applicable with $\varphi=\psi=e^{-r\langle\cdot\rangle^q}\in \SSS^{\{p!\}}_{\{p!^{1/q}\}}(\RR^d)$, $r>0$, $q\geq 1$, in view of the following bounds \cite[Corollary 3.4]{PPV}: for every $q>0$ and $r\in\RR\backslash\{0\}$ there is $C\geq 1$ such that
\begin{equation}\label{bound-for-exp-der}
|\partial^{\alpha}(e^{r\langle x\rangle^q})|\leq C^{|\alpha|}\alpha!e^{r\langle x\rangle^q +\langle x\rangle^{q-1}},\quad \forall x\in\RR^d,\, \forall \alpha\in\NN^d.
\end{equation}
In this case, the best possible constant $r>0$ for which the assumption in Proposition \ref{ex1} holds is the constant $r$ that appears in $e^{-r\langle \cdot\rangle^q}$ since $|\partial_{x_1}(e^{-r\langle x\rangle^q})|=rq|x_1|\langle x\rangle^{q-2}e^{-r\langle x\rangle^q}$ and the latter grows faster then $e^{-r\langle x\rangle^q}$ along the $x_1$-axis.\\
\indent Notice that the assumption in the proposition is satisfied for any $\varphi,\psi\in\SSS^{\{M_p\}}_{\{p!^{1/q}\}}(\RR^d)$ for some $r>0$.
\end{remark}

\begin{remark}\label{rem-for-ext-val-pro-est}
By carefully inspecting the proof of Proposition \ref{ex1}, one easily sees that the proposition is valid if one only assumes that $(M_p)_{p\in\NN}$ is a weight sequence which satisfies $p!^{1-1/q}\subset M_p$ when $q>1$. In fact, the proof is the same the only difference being that in the estimates \eqref{est-for-c-add-ass} and \eqref{est-for-con-add-assump-on-seq} instead of using
$$
\delta'''!\leq C''L^{|\delta'''|}M_{\delta'''}\quad \mbox{and}\quad (\gamma+\beta-\delta''')!^{1/q}\leq (\gamma+\beta)!^{1/q}
$$
one employs the bounds
$$
\delta'''!\leq C''L^{|\delta'''|}M_{\delta'''}\delta'''!^{1/q}\quad \mbox{and}\quad (\gamma+\beta-\delta''')!^{1/q}\leq (\gamma+\beta)!^{1/q}\delta'''!^{-1/q}.
$$
\end{remark}

As an immediate consequence of Proposition \ref{ex1}, we have the following result.

\begin{corollary}\label{posebno}
Let $(M_p)_{p\in\NN}$, $q\geq 1$, $r>0$ and $\varphi,\psi\in\mathcal{C}^{\infty}(\RR^d)$ be as in Proposition \ref{ex1}. Let $P_1$ and $P_2$ be ultradifferential operators of class $\{M_p\}$ and $\{p!^{1/q}\}$ respectively. Then for every $r'\in(0,r)$ there are $h',c'>0$ such that
\begin{equation}
\sup_{\alpha,\beta\in\NN^d}\sup_{x,\xi\in\RR^d}\frac{h'^{|\alpha+\beta|}} {M_{\alpha}\beta!^{1/q}} |D^{\alpha}_xD^{\beta}_{\xi}P_1(D_x)P_2(D_{\xi}) W(\varphi,\psi)(x,\xi)|e^{2r'\langle x\rangle^q}e^{M(c'|\xi|)}<\infty.
\end{equation}
\end{corollary}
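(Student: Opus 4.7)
The strategy is to expand the ultradifferential operators as series in derivatives, apply the pointwise bounds from Proposition \ref{ex1} term by term, and verify that the resulting double series converges with an estimate of the required shape. Writing $P_1(D_x) = \sum_{\alpha'} c^{(1)}_{\alpha'} D^{\alpha'}_x$ and $P_2(D_\xi) = \sum_{\beta'} c^{(2)}_{\beta'} D^{\beta'}_\xi$, we have
$$D^\alpha_x D^\beta_\xi P_1(D_x)P_2(D_\xi) W(\varphi,\psi) = \sum_{\alpha',\beta' \in \NN^d} c^{(1)}_{\alpha'} c^{(2)}_{\beta'} D^{\alpha+\alpha'}_x D^{\beta+\beta'}_\xi W(\varphi,\psi),$$
so, for the given $r'\in(0,r)$, Proposition \ref{ex1} (more precisely \eqref{inequal-sec-est0-wigner}) furnishes $h'',c'>0$ and $C_0>0$ such that each term is dominated by
$$C_0\, h''^{-|\alpha+\alpha'+\beta+\beta'|}\, M_{\alpha+\alpha'}\, (\beta+\beta')!^{1/q}\, e^{-2r'\langle x\rangle^q}\, e^{-M(c'|\xi|)}.$$

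Next, I would separate the variables $(\alpha,\beta)$ from $(\alpha',\beta')$. Condition $(M.2)$ gives $M_{\alpha+\alpha'}\leq c_0 H^{|\alpha|+|\alpha'|}M_\alpha M_{\alpha'}$, while $(\beta+\beta')!\leq 2^{|\beta+\beta'|}\beta!\beta'!$ yields $(\beta+\beta')!^{1/q}\leq 2^{(|\beta|+|\beta'|)/q}\beta!^{1/q}\beta'!^{1/q}$. On the other hand, since $P_1$ is of class $\{M_p\}$ and $P_2$ of class $\{p!^{1/q}\}$, for every $L_1,L_2>0$ there exist $C_1,C_2>0$ with $|c^{(1)}_{\alpha'}|\leq C_1 L_1^{|\alpha'|}/M_{\alpha'}$ and $|c^{(2)}_{\beta'}|\leq C_2 L_2^{|\beta'|}/\beta'!^{1/q}$. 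Plugging everything in, the factors $M_{\alpha'}$ and $\beta'!^{1/q}$ cancel, producing
$$\tilde C\, M_\alpha \beta!^{1/q}\, (H/h'')^{|\alpha|}\, (2^{1/q}/h'')^{|\beta|}\, e^{-2r'\langle x\rangle^q}\, e^{-M(c'|\xi|)}\, \Bigl(\sum_{\alpha'}(L_1 H/h'')^{|\alpha'|}\Bigr)\Bigl(\sum_{\beta'}(L_2 2^{1/q}/h'')^{|\beta'|}\Bigr).$$

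Finally, I choose $L_1,L_2>0$ small enough that $L_1 H/h''<1$ and $L_2 2^{1/q}/h''<1$; this is permissible because the coefficient bounds for ultradifferential operators hold for arbitrarily small $L_1,L_2$. The two multi-index geometric series then converge, and setting $h':=\min\{h''/H,\, h''/2^{1/q}\}$ absorbs the factors $(H/h'')^{|\alpha|}(2^{1/q}/h'')^{|\beta|}$ into $(1/h')^{|\alpha+\beta|}$, giving the claimed estimate with the same $c'$ as in Proposition \ref{ex1}. There is no serious obstacle in the argument; the key point to keep in mind is that the different natures of $P_1$ and $P_2$ are matched exactly to the asymmetric growth rates $M_\alpha$ and $\beta!^{1/q}$ appearing in Proposition \ref{ex1}, so the $\alpha'$- and $\beta'$-sums decouple cleanly after applying $(M.2)$ and the elementary bound on $(\beta+\beta')!$.
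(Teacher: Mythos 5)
Your argument is correct and is exactly the computation the paper intends: the authors state the corollary as ``an immediate consequence of Proposition \ref{ex1}'' without writing it out, and the intended route is precisely your term-by-term application of \eqref{inequal-sec-est0-wigner} to $D^{\alpha+\alpha'}_xD^{\beta+\beta'}_\xi W(\varphi,\psi)$, followed by $(M.2)$, the binomial bound on $(\beta+\beta')!$, and the arbitrarily-small-$L$ coefficient estimates to sum the geometric series. The only cosmetic point is that the coefficient bound for $P_2$ involves $|\beta'|!^{1/q}$ rather than $\beta'!^{1/q}$, but since $\beta'!\leq|\beta'|!$ this only strengthens your estimate, so nothing changes.
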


In order to consider test functions which decrease more rapid than $e^{-l\langle x\rangle^q}$ as $|x|\to\infty$, we proceed as follows.

\begin{proposition}\label{ex111}
Let $(M_p)_{p\in\NN}$ be a weight sequence that satisfies $p!\subset M_p$ and let $q\geq 1$. Let $f\in\mathcal{C}(\RR^d)\cap L^{\infty}(\RR^d)$ be non-negative and monotonically decreasing with respect to $|x|$, i.e. $|x|\leq |y|$ implies $f(x)\geq f(y)$. Let $\varphi,\psi\in\mathcal{C}^{\infty}(\RR^d)$ satisfy the following: there are $h',C'>0$ such that
\begin{equation}
\frac{h'^{|\alpha|}e^{h'\langle x\rangle^q} |\partial^{\alpha}\varphi(x)|}{M_{\alpha}}\leq C'f(x)\quad \mbox{and}\quad \frac{h'^{|\alpha|}e^{h'\langle x\rangle^q} |\partial^{\alpha}\psi(x)|}{M_{\alpha}}\leq C'f(x),
\end{equation}
for all $x\in\RR^d$, $\alpha\in\NN^d$. Then there are $h,C>0$ such that
\begin{equation}\label{est-wigner-for-mon-decf}
\frac{h^{|\alpha+\beta+\gamma+\delta|}|x^{\gamma}\xi^{\delta} \partial^{\alpha}_x\partial^{\beta}_{\xi}W(\varphi,\psi)(x,\xi)|} {M_{\alpha}\beta!^{1/q}\gamma!^{1/q}M_{\delta}}\leq Cf(x),\quad \mbox{for all}\,\, x,\xi\in\RR^d,\, \alpha,\beta,\gamma,\delta\in\NN^d.
\end{equation}
Furthermore, there are $h,c,C>0$ such that
\begin{equation}\label{est-wigne-sec-for-decf}
\frac{h^{|\alpha+\beta|} |\partial^{\alpha}_x\partial^{\beta}_{\xi}W(\varphi,\psi)(x,\xi)|} {M_{\alpha}\beta!^{1/q}} \leq Ce^{-c\langle x\rangle^{q}}e^{-M(c|\xi|)} f(x),\quad \mbox{for all}\,\, x,\xi\in\RR^d,\, \alpha,\beta\in\NN^d.
\end{equation}
\end{proposition}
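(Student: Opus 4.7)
My strategy is to mirror the proof of Proposition \ref{ex1}, inserting a single new pointwise bound that accounts for the general envelope $f$. The reduction \eqref{est-wigner-for-mon-decf} $\Rightarrow$ \eqref{est-wigne-sec-for-decf} is a word-for-word copy of the opening summation argument in the proof of Proposition \ref{ex1}: apply \eqref{est-wigner-for-mon-decf} with $h$ replaced by $2h$ and sum over $\gamma$ and $\delta$, using the elementary lower bounds
\begin{equation*}
\sum_{\gamma\in\NN^d}\frac{h^{|\gamma|}|x|^{|\gamma|}}{2^{|\gamma|}d^{|\gamma|}\gamma!^{1/q}}\geq e^{(h|x|/(2d))^q/q}\quad\text{and}\quad\sum_{\delta\in\NN^d}\frac{h^{|\delta|}|\xi|^{|\delta|}}{2^{|\delta|}d^{|\delta|}M_\delta}\geq e^{M(h|\xi|/(2d))}.
\end{equation*}
The first sum makes crucial use of $q\geq 1$, and after rewriting $|x|^q\geq c_1\langle x\rangle^q-c_2$ it produces the factor $e^{-c\langle x\rangle^q}$; the second yields $e^{-M(c|\xi|)}$.

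The core task is therefore \eqref{est-wigner-for-mon-decf}. I would start from the integral representation of $W(\varphi,\psi)$, differentiate under the integral, integrate by parts in $y$ to transfer $\xi^\delta$ into $\partial^\delta_y$, and expand via Leibniz, thereby reproducing verbatim the decomposition displayed in \eqref{est-der-wig-for-first-psk}. The hypothesis then replaces each $|\partial^{\delta'+\alpha'}\varphi(x+y/2)|$ and $|\partial^{\delta''+\alpha''}\psi(x-y/2)|$ by $C' M_{\delta'+\alpha'}h'^{-|\delta'+\alpha'|}e^{-h'\langle x+y/2\rangle^q}f(x+y/2)$ and its counterpart. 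At this point, the genuinely new ingredient enters: since $(x+y/2)+(x-y/2)=2x$ forces $\max(|x+y/2|,|x-y/2|)\geq |x|$, the monotonicity and $L^\infty$-boundedness of $f$ give
\begin{equation*}
f(x+y/2)\,f(x-y/2)\leq \|f\|_\infty\,f(x),
\end{equation*}
which pulls a clean $f(x)$ factor outside the $y$-integral.

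What remains is the scalar product $|x|^{|\gamma|}|y|^{|\beta-\delta'''|}e^{-h'\langle x+y/2\rangle^q-h'\langle x-y/2\rangle^q}$, and this is handled exactly as in Proposition \ref{ex1} via \eqref{ine-for-prod-abs} and $\lambda^k\leq k!^{1/q}e^{\lambda^q/q}$: I would split $h'=r_1+r_2$, using $r_1$ to absorb the polynomial powers into $(\gamma+\beta-\delta''')!^{1/q}$ and reserving $r_2$ to keep the $y$-integral finite. Unlike in Proposition \ref{ex1}, no residual factor $e^{-\tilde r\langle x\rangle^q}$ must be preserved, since the right-hand side of \eqref{est-wigner-for-mon-decf} carries only $f(x)$. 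A final application of $p!\subset M_p$, $(M.2)$, and the multinomial identity $\sum_{\delta'+\delta''+\delta'''=\delta}\binom{\delta}{\delta',\delta'',\delta'''}=3^{|\delta|}$ regroups the geometric factors into $M_\alpha M_\delta$ and yields \eqref{est-wigner-for-mon-decf} after taking $h$ sufficiently small. The only conceptually new step is the monotonicity bound on $f$; the remaining obstacle is purely bookkeeping, namely tracking the constants through the multinomial sum and the splitting of the exponential weight so that the final $h$ is genuinely uniform in all variables.
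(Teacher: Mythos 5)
Your proposal is correct and follows essentially the same route as the paper, which itself only sketches the argument as "analogous to Proposition \ref{ex1}" and isolates precisely the one new ingredient you identify: since $\max\{|x+y/2|,|x-y/2|\}\geq |x|$, monotonicity and boundedness of $f$ give $f(x+y/2)f(x-y/2)\leq \|f\|_{L^\infty}f(x)$. The reduction of \eqref{est-wigne-sec-for-decf} to \eqref{est-wigner-for-mon-decf} by summing over $\gamma$ and $\delta$, and the reuse of the decomposition \eqref{est-der-wig-for-first-psk} with the full exponential budget spent on absorbing the polynomial factors and keeping the $y$-integral convergent, match the intended proof.
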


\begin{proof} Similarly as in the first part of the proof of Proposition \ref{ex1}, one shows that \eqref{est-wigner-for-mon-decf} is equivalent to \eqref{est-wigne-sec-for-decf}. The proof of \eqref{est-wigner-for-mon-decf} is analogous to the proof of Proposition \ref{ex1} and we omit it. The only difference is that in \eqref{est-der-wig-for-first-psk} now there is the extra term $f(x+y/2)f(x-y/2)$ which can be bounded as follows. At least one of $|x+y/2|$ and $|x-y/2|$ must be greater or equal to $|x|$ since $\max\{|x+y/2|,|x-y/2|\}<|x|$ leads to a contradiction and, consequently, the properties of $f$ imply $f(x+y/2)f(x-y/2)\leq C''f(x)$, for all $x,y\in\RR^d$.
\end{proof}

\begin{remark}
Similarly as in Remark \ref{rem-for-ext-val-pro-est}, Proposition \ref{ex111} is valid if one only assumes that $(M_p)_{p\in\NN}$ is a weight sequence that satisfies $p!^{1-1/q}\subset M_p$ when $q>1$.
\end{remark}

\begin{corollary}\label{iter2}
Let $(M_p)_{p\in\NN}$, $q\geq 1$, $f$ and $\varphi,\psi\in\mathcal{C}^{\infty}(\RR^d)$ be as in Proposition \ref{ex111}. Let $P_1$ and $P_2$ be ultradifferential operators of class $\{M_p\}$ and $\{p!^{1/q}\}$ respectively. Then there are $h,c,C>0$ such that
\begin{equation}
\frac{h^{|\alpha+\beta|}} {M_{\alpha}\beta!^{1/q}} |D^{\alpha}_xD^{\beta}_{\xi}P_1(D_x)P_2(D_{\xi}) W(\varphi,\psi)(x,\xi)|\leq Ce^{-c\langle x\rangle^q}e^{-M(c|\xi|)}f(x),
\end{equation}
for all $x,\xi\in\RR^d$, $\alpha,\beta\in\NN^d$.
\end{corollary}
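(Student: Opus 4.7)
The plan is to reduce the corollary to Proposition \ref{ex111} by expanding the ultradifferential operators as formal series, differentiating term-by-term, applying the proposition to the resulting higher-order derivatives of $W(\varphi,\psi)$, and summing. No new analytic idea is needed; the work is in organising the constants so that both series converge after choosing the coefficient-growth parameter of $P_1$ and $P_2$ sufficiently small.

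\textbf{Step 1: expansion.} Write $P_1(D) = \sum_{\mu\in\NN^d} a_\mu D^\mu$ with $|a_\mu| \leq C_1 L^{|\mu|}/M_\mu$ for every $L>0$, and $P_2(D) = \sum_{\nu\in\NN^d} b_\nu D^\nu$ with $|b_\nu| \leq C_2 L^{|\nu|}/\nu!^{1/q}$ for every $L>0$. Differentiating term-by-term gives
\begin{equation*}
\bigl|D^{\alpha}_x D^{\beta}_{\xi} P_1(D_x)P_2(D_{\xi}) W(\varphi,\psi)(x,\xi)\bigr| \leq \sum_{\mu,\nu\in\NN^d} |a_\mu|\,|b_\nu|\,\bigl|D^{\alpha+\mu}_x D^{\beta+\nu}_\xi W(\varphi,\psi)(x,\xi)\bigr|.
\end{equation*}

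\textbf{Step 2: apply Proposition \ref{ex111}.} By \eqref{est-wigne-sec-for-decf}, there exist $h_0,c_0,C_0>0$ such that
\begin{equation*}
\bigl|D^{\alpha+\mu}_x D^{\beta+\nu}_\xi W(\varphi,\psi)(x,\xi)\bigr| \leq C_0\, h_0^{-|\alpha+\mu+\beta+\nu|}\, M_{\alpha+\mu}\,(\beta+\nu)!^{1/q}\, e^{-c_0\langle x\rangle^{q}} e^{-M(c_0|\xi|)} f(x).
\end{equation*}
Now invoke $(M.2)$, $M_{\alpha+\mu} \leq c_0' H^{|\alpha+\mu|} M_\alpha M_\mu$, and the elementary $(\beta+\nu)!^{1/q} \leq 2^{(|\beta|+|\nu|)/q}\beta!^{1/q}\nu!^{1/q}$, so that everything in $\alpha,\beta$ separates cleanly from everything in $\mu,\nu$.

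\textbf{Step 3: convergence of the series and bookkeeping.} Combining Steps 1 and 2, the $(\mu,\nu)$-sum factors and one is left with estimating
\begin{equation*}
\sum_{\mu\in\NN^d}|a_\mu|M_\mu\bigl(H/h_0\bigr)^{|\mu|}\quad \text{and} \quad \sum_{\nu\in\NN^d}|b_\nu|\nu!^{1/q}\bigl(2^{1/q}/h_0\bigr)^{|\nu|}.
\end{equation*}
Choosing the parameter $L$ in the coefficient bounds so that $LH/h_0 < 1$ and $L\cdot 2^{1/q}/h_0 < 1$, both geometric series $\sum_{\mu}(LH/h_0)^{|\mu|}$ and $\sum_{\nu}(L\,2^{1/q}/h_0)^{|\nu|}$ converge on $\NN^d$. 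The remaining $\alpha,\beta$-dependent factor is $H^{|\alpha|} h_0^{-|\alpha|} 2^{|\beta|/q} h_0^{-|\beta|} M_\alpha \beta!^{1/q}$, which is absorbed into the desired form $C h^{-|\alpha+\beta|} M_\alpha \beta!^{1/q}$ by setting $h := \min(h_0/H,\, h_0/2^{1/q})$ and redefining $C$ and $c$ (taking $c\leq c_0$) to swallow the geometric-series constants.

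\textbf{Main obstacle.} There is no substantive obstacle; the proof is entirely bookkeeping. The only point requiring attention is synchronising the two different growth controls: the $M_\mu$ regularity of $P_1$ must be paired with the $(M.2)$ factorisation of $M_{\alpha+\mu}$, and the $\nu!^{1/q}$ regularity of $P_2$ must be paired with the elementary splitting of $(\beta+\nu)!^{1/q}$. Once one aligns these, the smallness of $L$ (allowed because $P_1$ and $P_2$ are ultradifferential operators of the stated classes) handles the convergence of both sums simultaneously.
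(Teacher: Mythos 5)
Your proposal is correct and is precisely the argument the paper has in mind: the corollary is stated as an immediate consequence of Proposition \ref{ex111}, obtained by expanding $P_1$ and $P_2$, applying \eqref{est-wigne-sec-for-decf} to $D^{\alpha+\mu}_xD^{\beta+\nu}_\xi W(\varphi,\psi)$, splitting $M_{\alpha+\mu}$ via $(M.2)$ and $(\beta+\nu)!^{1/q}$ elementarily, and summing the resulting geometric series after choosing $L$ small. Your bookkeeping (choice of $h$, absorption of constants, $c=c_0$) is accurate.
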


\subsection{Example}\label{extra}
Consider $\varphi(x)=\exp(-te^{\langle x\rangle^q})$, $x\in\RR^d$, with $t>0$ and $q>0$. Our goal is to estimate the derivatives of $\varphi$. We apply the Fa\'a di Bruno formula \cite[Corollary 2.10]{Faa} to the composition of $e^{-t\lambda}$, $\lambda>0$, with $e^{\langle\cdot\rangle^q}$. Using the formulation as in \cite[Corollary 2.10]{Faa} and employing \eqref{bound-for-exp-der} (with $r=1$), we infer the following bounds for any $\rho>0$ and $\alpha\in\NN^d\backslash\{0\}$
\begin{align*}
\frac{|\partial^{\alpha}\varphi(x)|}{|\alpha|!}&\leq \sum_{m=1}^{|\alpha|}t^m \exp(-te^{\langle x\rangle^q})\sum_{p(\alpha,m)}\prod_{j=1}^{|\alpha|}\frac{C^{k_j|\alpha^{(j)}|}e^{k_j\langle x\rangle^q+k_j\langle x\rangle^{q-1}}}{k_j!}\\
&\leq C^{|\alpha|}|\alpha|!^{\rho}\exp(-te^{\langle x\rangle^q}) \sum_{m=1}^{|\alpha|}\frac{t^m e^{m\langle x\rangle^q+m\langle x\rangle^{q-1}}}{m!^{1+\rho}}\cdot m!\sum_{p(\alpha,m)}\prod_{j=1}^{|\alpha|}\frac{1}{k_j!}\\
&\leq C^{|\alpha|}|\alpha|!^{\rho}\exp\left(-te^{\langle x\rangle^q}+(1+\rho)t^{1/(1+\rho)}e^{(1+\rho)^{-1}(\langle x\rangle^q+\langle x\rangle^{q-1})}\right) \sum_{m=1}^{|\alpha|} m!\sum_{p(\alpha,m)}\prod_{j=1}^{|\alpha|}\frac{1}{k_j!}.
\end{align*}
Employing \cite[Lemma 7.4]{PP3}, we infer
$$
\sum_{m=1}^{|\alpha|}m!\sum_{p(\alpha,m)}\prod_{j=1}^{|\alpha|}\frac{1}{k_j!} \leq 2^{|\alpha|(d+1)},
$$
hence
\begin{equation}
|\partial^{\alpha}\varphi(x)|\leq (2^{d+1}C)^{|\alpha|}|\alpha|!^{1+\rho}\exp\left(-te^{\langle x\rangle^q}+(1+\rho)t^{1/(1+\rho)}e^{(1+\rho)^{-1}(\langle x\rangle^q+\langle x\rangle^{q-1})}\right).
\end{equation}
Notice that for every $\varepsilon>0$ there exists $C'=C'(\varepsilon,t,\rho)\geq 1$ such that
$$
(1+\rho)t^{1/(1+\rho)}e^{(1+\rho)^{-1}(\langle x\rangle^q+\langle x\rangle^{q-1})}\leq \varepsilon e^{\langle x\rangle^q},\quad \mbox{for all}\,\, |x|\geq C'.
$$
Consequently, we obtained the following result.

\begin{proposition}\label{121}
Let $\varphi(x)=\exp(-te^{\langle x\rangle^q})$, with $t>0$ and $q>0$. For every $\rho>0$ and $\tau\in(0,t)$ there exists $C\geq 1$ such that
$$
|\partial^{\alpha}\varphi(x)|\leq C^{|\alpha|}|\alpha|!^{1+\rho} \exp(-\tau e^{\langle x\rangle^q}),\quad \forall x\in\RR^d,\, \forall \alpha\in\NN^d.
$$
\end{proposition}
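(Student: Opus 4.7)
My plan is to combine the pointwise bound already derived in the paragraph immediately preceding the statement with the elementary asymptotic observation recorded just before the proposition. Concretely, the Faà di Bruno computation above gives, for any fixed $\rho > 0$ and any $\alpha \in \NN^d \setminus \{0\}$,
$$
|\partial^{\alpha}\varphi(x)|\leq (2^{d+1}C)^{|\alpha|}|\alpha|!^{1+\rho}\exp\left(-te^{\langle x\rangle^q}+(1+\rho)t^{1/(1+\rho)}e^{(1+\rho)^{-1}(\langle x\rangle^q+\langle x\rangle^{q-1})}\right),
$$
with the case $\alpha = 0$ being immediate since $|\varphi(x)| = \exp(-te^{\langle x\rangle^q})$. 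What remains is to show that, at the price of replacing $t$ by any $\tau < t$ in the majorant, one can absorb the second term in the exponent into the leading $-te^{\langle x\rangle^q}$.

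Given $\tau \in (0,t)$, I would put $\varepsilon := t - \tau > 0$ and invoke the observation cited just above the proposition with this $\varepsilon$ and with the already-fixed $\rho$, producing $C' = C'(\varepsilon,t,\rho) \geq 1$ such that
$$
(1+\rho)t^{1/(1+\rho)}e^{(1+\rho)^{-1}(\langle x\rangle^q+\langle x\rangle^{q-1})} \leq \varepsilon e^{\langle x\rangle^q}, \quad |x| \geq C'.
$$
Substituting this into the previous display, on $\{|x| \geq C'\}$ the exponent is at most $-te^{\langle x\rangle^q} + \varepsilon e^{\langle x\rangle^q} = -\tau e^{\langle x\rangle^q}$, so the desired inequality holds on this region with constant $2^{d+1}C$.

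It remains to handle the compact ball $\{|x| \leq C'\}$, on which both $\exp(-\tau e^{\langle x\rangle^q})$ and the exponential factor appearing in the Faà di Bruno estimate are bounded between two positive constants that depend only on $\tau,t,\rho,C'$ and not on $\alpha$. Enlarging the constant $C$ to absorb the resulting finite ratio then extends the bound to all of $\RR^d$. I do not foresee any real obstacle: the substantive work (the Faà di Bruno expansion combined with the bound \eqref{bound-for-exp-der} from \cite{PPV} and the combinatorial estimate from \cite[Lemma 7.4]{PP3}) has already been carried out, and the proposition amounts to a clean packaging of that bound once one trades a small portion of the leading exponential rate for the doubly-exponential error.
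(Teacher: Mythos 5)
Your proposal is correct and follows exactly the paper's own route: the paper derives the Faà di Bruno bound in Subsection \ref{extra} and then concludes the proposition from the observation that the doubly-exponential error term is eventually dominated by $\varepsilon e^{\langle x\rangle^q}$, which is precisely your argument with $\varepsilon=t-\tau$. Your explicit treatment of the ball $\{|x|\leq C'\}$ and of the case $\alpha=0$ only spells out details the paper leaves implicit.
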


In view of Proposition \ref{121}, Proposition \ref{ex111} is applicable for $\varphi(x)=\psi(x)=\exp(-te^{\langle x\rangle^s})$, $x\in\RR^d$, with $t>0$ and $s>0$, $f(x)=\exp(-\tau e^{\langle x\rangle^s})$, $\tau\in(0,t)$, and $M_p=p!^{1+\rho}$, $\rho>0$.

\section{Localisation operators with super-exponentially bounded symbols}

\subsection{Localisation operators}

Let $(M_p)_{p\in\NN}$ be a weight sequences which satisfies $p!^{1/2}\subset M_p$. The localisation operator $A_a^{\varphi_1,\varphi_2}$ with symbol $a\in\SSS'^{\{M_p\}}(\RR^{2d})$ and windows $\varphi_1,\varphi_2\in\SSS^{\{M_p\}}(\RR^d)$ is defined by
\begin{equation}\label{av}
A^{\varphi_1,\varphi_2}_a:\SSS^{\{M_p\}}(\RR^d)\rightarrow \SSS'^{\{M_p\}}(\RR^d),\, A^{\varphi_1,\varphi_2}_a \psi= V^*_{\varphi_2}(aV_{\varphi_1}\psi);
\end{equation}
when $\varphi_1=\varphi_2=:\varphi$, we employ the shorthand $A^{\varphi}_a$. The continuity properties of the STFT and its adjoint we recalled in Subsection \ref{stft-sub-con-wig} imply that $A^{\varphi_1,\varphi_2}_a$ is well-defined and continuous. Notice that $\langle A^{\varphi_1,\varphi_2}_a\psi,\theta\rangle=\langle a, V_{\varphi_1}\psi \overline{V_{\varphi_2}\overline{\theta}}\rangle$, $\psi,\theta\in\SSS^{\{M_p\}}(\RR^d)$. In this case (see \cite[Theorem 5.2]{teofanov})
\begin{equation}\label{equ-rel-loc-weyl-quat-eqt}
A^{\varphi_1,\varphi_2}_a=b^w,\quad \mbox{with}\quad b:=a*W(\varphi_2,\varphi_1).
\end{equation}
Assume now that $(M_p)_{p\in\NN}$ additionally satisfies $(M.3)$ and $\varphi_1,\varphi_2\in\SSS^{\{M_p\}}_{\{p!^{1/q}\}}(\RR^d)$ with $q\geq 1$. We can immediately conclude that $A^{\varphi_1,\varphi_2}_a:\SSS^{\{M_p\}}(\RR^d)\rightarrow \SSS'^{\{M_p\}}(\RR^d)$ is well-defined and continuous when $a\in\SSS'^{\{M_p\}}(\RR^{2d})$ (as $p!^{1/q}\subset M_p$). Our goal is to extend the definition of $A^{\varphi_1,\varphi_2}_a$ for a larger class of symbols $a$ by allowing it to be a mapping from $\DD^{\{M_p\}}(\RR^d)$ into $\DD'^{\{M_p\}}(\RR^d)$ but keeping the relation \eqref{equ-rel-loc-weyl-quat-eqt} where the convolution in \eqref{equ-rel-loc-weyl-quat-eqt} should be understood in $\DD'^{\{M_p\}}(\RR^{2d})$-sense.

\begin{remark}\label{rem-for-res-ajr}
If $(M_p)_{p\in\NN}$, $\varphi_1$ and $\varphi_2$ are as above, by employing \cite[Proposition 2.8, Proposition 2.9]{and-jasson}, one can extend $A^{\varphi_1,\varphi_2}_a$ to a continuous operator from $\DD^{\{M_p\}}(\RR^d)$ into $\DD'^{\{M_p\}}(\RR^d)$ when $a$ belongs to
\begin{equation}\label{set-from-ord-the-a}
\left(\SSS'^{\{M_p\}}_{\{p!^{1/q}\}}(\RR^d)\hat{\otimes} \SSS'^{\{p!^{1/q}\}}_{\{M_p\}}(\RR^d)\right)\cap \DD'^{\{M_p\}}(\RR^{2d})
\end{equation}
provided that the $\DD'^{\{M_p\}}$-convolution of $a$ and $W(\varphi_2,\varphi_1)$ exists and the Weyl quantisation of the resulting symbol $b$ (given by \eqref{equ-rel-loc-weyl-quat-eqt}) is a continuous operator from $\DD^{\{M_p\}}(\RR^d)$ into $\DD'^{\{M_p\}}(\RR^d)$. The estimates we proved in the previous section will allows us to define localisation operators with symbols that go beyond \eqref{set-from-ord-the-a}.
\end{remark}

\subsection{Extensions}

Throughout the rest of the article, $(M_p)_{p\in\NN}$ will be a weight sequence that additionally satisfies $(M.3)$.\\
\indent We start by defining the following class of subspaces of $\DD'^{\{M_p\}}(\RR^{2d})$. For $q\geq 1$ and $\varphi_1,\varphi_2\in\SSS^{\{M_p\}}_{\{p!^{1/q}\}}(\RR^d)$, we denote by $\mathcal{A}_{\varphi_1,\varphi_2}$ the space of all $a\in\DD'^{\{M_p\}}(\RR^{2d})$ such that the $\DD'^{\{M_p\}}$-convolution of $a$ and $W(\varphi_2,\varphi_1)$ exists and
\begin{equation}\label{equ-for-wey-from-loc}
b:=a*W(\varphi_2,\varphi_1)\in\DD'^{\{M_p\}}(\RR^{2d})
\end{equation}
satisfies the following:
\begin{itemize}
\item[$(i)$] $b\in \mathcal{C}(\RR^{2d})$,
\item[$(ii)$] for each $\xi\in\RR^d$, $b(\cdot,\xi)\in\mathcal{C}^{\infty}(\RR^d)$ and for every $\alpha\in\NN^d$, the function $(x,\xi)\mapsto D^{\alpha}_x b(x,\xi)$ is continuous on $\RR^{2d}$,
\item[$(iii)$] for every compact subset $K$ of $\RR^d$ there exist $(k_p)\in\mathfrak{R}$, $h>0$ and $C>0$  such that
\begin{equation}\label{gelfand1}
\frac{h^\alpha}{M_{\alpha}}|D^{\alpha}_x b(x,\xi)|\leq C  e^{N_{k_p}(|\xi|)},\quad \forall x\in K,\, \forall\xi\in\RR^d,\, \forall\alpha\in\NN^d.
\end{equation}
\end{itemize}

Employing the same technique as in the proof of \cite[Theorem 5.1]{PP}, one can show the following result.

\begin{proposition}\label{pre}
Let $q\geq 1$ and $\varphi_1,\varphi_2\in\SSS^{\{M_p\}}_{\{p!^{1/q}\}}(\RR^d)$. Let $a\in\mathcal{A}_{\varphi_1,\varphi_2}$ and let $b$ be defined by \eqref{equ-for-wey-from-loc}. Then, for each $\chi\in\mathcal D^{\{M_p\}}(\RR^{2d})$, the integral
\begin{equation}\label{osci}
\int_{\mathbb R^{d}}\int_{\RR^d}\int_{\RR^d}e^{2\pi i(x-y)\xi} b\left(\frac{x+y}{2},\xi\right)\chi(x,y)dxdyd\xi
\end{equation}
is well-defined as oscillatory integral and
\begin{equation}\label{ker-ult-giv}
\chi\mapsto \int_{\mathbb R^{d}}\int_{\RR^d}\int_{\RR^d}e^{2\pi i(x-y)\xi} b\left(\frac{x+y}{2},\xi\right)\chi(x,y)dxdyd\xi,\quad \DD^{\{M_p\}}(\RR^{2d})\rightarrow \CC,
\end{equation}
is a well-defined element of $\DD'^{\{M_p\}}(\RR^{2d})$.
\end{proposition}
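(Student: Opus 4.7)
The plan is to realise \eqref{osci} as the limit of regularised integrals obtained by truncating in $\xi$ and then integrating by parts in $y$ against a suitably chosen ultradifferential operator. Fix $\chi\in\DD^{\{M_p\}}(\RR^{2d})$ and $\psi\in\DD^{\{M_p\}}(\RR^d)$ with $\psi(0)=1$, and for $n\in\ZZ_+$ put $\psi_n(\xi):=\psi(\xi/n)$. The truncated integral
$$
I_n(\chi):=\iiint_{\RR^{3d}} e^{2\pi i(x-y)\xi}\,b\!\left(\tfrac{x+y}{2},\xi\right)\chi(x,y)\psi_n(\xi)\,dxdyd\xi
$$
is absolutely convergent since $\chi$ and $\psi_n$ have compact support and $b\in\mathcal{C}(\RR^{2d})$. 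I would then show that $I_n(\chi)$ has a limit $I(\chi)$ as $n\to\infty$ which is independent of the cut-off $\psi$, and that $\chi\mapsto I(\chi)$ is continuous from $\DD^{\{M_p\}}(\RR^{2d})$ into $\CC$.

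Let $K\subset\RR^d$ be a compact convex set containing the $x$- and $y$-projections of $\supp\chi$, so that $(x+y)/2\in K$ on $\supp\chi$. Hypothesis $(iii)$ supplies $(k_p)\in\mathfrak{R}$ and $C_0,h_0>0$ with $|D^\alpha_x b(z,\xi)|\leq C_0 M_\alpha h_0^{-|\alpha|}e^{N_{k_p}(|\xi|)}$ for $z\in K$, $\xi\in\RR^d$, $\alpha\in\NN^d$. Since $e^{N_{k_p}(\lambda)}e^{-M(h\lambda)}\to 0$ for every $h>0$, a Komatsu-type infinite product $\prod_j(1+D^2/(m_j r_j)^2)$ with $(r_p)\in\mathfrak{R}$ chosen to appropriately majorise $(k_p)$ delivers an ultradifferential operator $P$ of class $\{M_p\}$ with $P(-2\pi\xi)\neq 0$ and
$$
|P(-2\pi\xi)|\geq c_0(1+|\xi|)^{d+1}e^{N_{k_p}(|\xi|)},\qquad \xi\in\RR^d.
$$
Using $P(D_y)e^{2\pi i(x-y)\xi}=P(-2\pi\xi)e^{2\pi i(x-y)\xi}$ and integrating by parts in $y$ (justified pointwise in $\xi$, as the $y$-integrand is smooth and compactly supported), one obtains
$$
I_n(\chi)=\iiint \frac{e^{2\pi i(x-y)\xi}}{P(-2\pi\xi)}\,P(-D_y)\!\left[b\!\left(\tfrac{x+y}{2},\xi\right)\chi(x,y)\right]\psi_n(\xi)\,dxdyd\xi.
$$

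Expanding $P(-D_y)[b\chi]$ by Leibniz and combining $|c_\alpha|\leq C_L L^{|\alpha|}/M_\alpha$ (valid for every $L>0$ since $P$ is of class $\{M_p\}$), the bound on $D^\alpha_x b$ on $K$, the seminorm bound $|D^\beta\chi|\leq A h_1^{-|\beta|}M_\beta$ on $\supp\chi$, and $M_{\alpha-\beta}M_\beta\leq M_\alpha$ (a consequence of $(M.1)$), a geometric-series estimate with $L$ sufficiently small (depending on $h_0,h_1$) gives
$$
\bigl|P(-D_y)[b(\tfrac{x+y}{2},\xi)\chi(x,y)]\bigr|\leq C_1 e^{N_{k_p}(|\xi|)},\qquad (x,y)\in\supp\chi,\ \xi\in\RR^d.
$$
Together with the lower bound on $|P(-2\pi\xi)|$ this majorises the post-IBP integrand uniformly in $n$ by $C_2(1+|\xi|)^{-(d+1)}\mathbf{1}_{\supp\chi}(x,y)\in L^1(\RR^{3d})$. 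Dominated convergence yields $I(\chi):=\lim_n I_n(\chi)$ equal to the absolutely convergent integral obtained by replacing $\psi_n$ with $1$; independence of $\psi$ follows from the same majorant, and a parallel computation with two choices of $P$ shows independence of $P$. Moreover, the same Leibniz estimate gives $|I(\chi)|\leq C(K,h_1)\|\chi\|_{M_p,h_1}$ whenever $\supp\chi\subseteq K\times K$, which is the required continuity to conclude $I\in\DD'^{\{M_p\}}(\RR^{2d})$ by the standard criterion on each Banach step of the Roumieu inductive limit.

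The principal technical obstacle is the construction of $P$: its symbol must exceed $e^{N_{k_p}(|\xi|)}$ by an extra polynomial factor while its coefficients must still satisfy the Roumieu bound $|c_\alpha|\leq C_L L^{|\alpha|}/M_\alpha$ for every $L>0$. This is possible precisely because $(k_p)\in\mathfrak{R}$ places $N_{k_p}$ strictly below every $M(h\cdot)$, which is exactly the regime where Komatsu's infinite-product parametrix lands in class $\{M_p\}$. Once $P$ is in hand, the remainder is routine bookkeeping with Leibniz, $(M.1)$ and the associated-function formalism.
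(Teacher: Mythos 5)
Your argument is correct and follows essentially the route the paper itself takes: the paper gives no written proof but delegates to the technique of \cite[Theorem 5.1]{PP}, which is precisely this regularisation of the oscillatory integral by integration by parts in $y$ against a Komatsu-type ultradifferential operator whose symbol dominates $(1+|\xi|)^{d+1}e^{N_{k_p}(|\xi|)}$ on the relevant compact set, followed by a Leibniz estimate using $(M.1)$, dominated convergence, and the Banach-step continuity criterion for the Roumieu inductive limit. One terminological slip: the auxiliary sequence $(r_p)$ must \emph{minorise} $(k_p)$ rather than majorise it --- decreasing $r_p$ increases $N_{r_p}$ --- but your index-shift construction of $(r'_p)$ shows you have the correct mechanism in mind.
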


Denoting by $K\in\DD'^{\{M_p\}}(\RR^{2d})$ the ultradistribution \eqref{ker-ult-giv}, Proposition \ref{pre} verifies that
$$
b^w:\DD^{\{M_p\}}(\RR^d)\rightarrow \DD'^{\{M_p\}}(\RR^d),\quad \langle b^w \psi,\theta\rangle=\langle K,\theta\otimes \psi\rangle,\,\,\psi,\theta\in\DD^{\{M_p\}}(\RR^d),
$$
is well-defined and continuous (i.e. the Weyl quantisation of $b$ with kernel $K$). Given $q\geq1$, $\varphi_1,\varphi_2\in\SSS^{\{M_p\}}_{\{p!^{1/q}\}}(\RR^d)$ and $a\in\mathcal{A}_{\varphi_1,\varphi_2}$, we define the localisation operator $A^{\varphi_1,\varphi_2}_a:\DD^{\{M_p\}}(\RR^d)\rightarrow \DD'^{\{M_p\}}(\RR^d)$ by $A^{\varphi_1,\varphi_2}_a:=b^w$ with $b$ given by \eqref{equ-for-wey-from-loc}. It is straightforward to verify that $\SSS'^{\{M_p\}}(\RR^{2d})\subseteq \mathcal{A}_{\varphi_1,\varphi_2}$ (since $W(\varphi_2,\varphi_1)\in\SSS^{\{M_p\}}(\RR^{2d})$) and that the above definition of $A^{\varphi_1,\varphi_2}_a$, $a\in\SSS'^{\{M_p\}}(\RR^{2d})$, coincides with the standard one \eqref{av} (cf. \eqref{equ-rel-loc-weyl-quat-eqt}). Our goal is to find ultradistributions $a\in\mathcal{A}_{\varphi_1,\varphi_2}$ which do not belong to $\SSS'^{\{M_p\}}(\RR^{2d})$. For such symbols $a$, Proposition \ref{pre} states that the operators $A^{\varphi_1,\varphi_2}_a:\DD^{\{M_p\}}(\RR^d)\rightarrow \DD'^{\{M_p\}}(\RR^d)$ are well-defined and continuous and they are non-trivial extensions of the classical localisation operators.

\begin{theorem}\label{ext22}
Let $\varphi_1,\varphi_2\in\SSS^{\{M_p\}}_{\{p!^{1/q}\}}(\RR^d)$, $q\geq 1$, satisfy the assumptions in Proposition \ref{ex1} for some $r>0$. Let $f_j\in L^1_{\operatorname{loc}}(\RR^d)$, $j=1,\ldots,k$, be such that $e^{-l\langle\cdot\rangle^q}f_j\in L^{\infty}(\RR^d)$, $j=1,\ldots,k$, for some $l\in(0,2r)$. Then for any $g_j\in\SSS'^{\{M_p\}}(\RR^d)$, $j=1,\ldots,k$, and ultradifferential operators $P_j$, $j=1,\ldots,k$, of class $\{M_p\}$, the ultradistribution
$$
a(x,\xi)= \sum_{j=1}^k(P_j(D_x)f_j(x))\otimes g_j(\xi)
$$
belongs to $\mathcal{A}_{\varphi_1,\varphi_2}$.
\end{theorem}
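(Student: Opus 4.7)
By linearity of the construction, one only needs to treat a single term $a(x,\xi)=(P(D_x)f(x))\otimes g(\xi)$ with $|f(y)|\le Ce^{l\langle y\rangle^q}$ for some $l\in(0,2r)$, $g\in\SSS'^{\{M_p\}}(\RR^d)$, and $P$ an ultradifferential operator of class $\{M_p\}$. The key quantitative ingredient is that by Proposition~\ref{ex1} and Corollary~\ref{posebno}, for any $r'\in(l/2,r)$ there exist $h',c'>0$ such that
\begin{equation*}
|D^\alpha_x D^\beta_\xi P(-D_x)W(\varphi_2,\varphi_1)(x,\xi)|\le Ch'^{-|\alpha+\beta|}M_\alpha\beta!^{1/q}\,e^{-2r'\langle x\rangle^q}\,e^{-M(c'|\xi|)};
\end{equation*}
in particular the decay rate $2r'$ of $W:=W(\varphi_2,\varphi_1)$ in $x$ can be chosen strictly larger than the growth rate $l$ of $f$, which is the gap the proof will exploit.

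Since $W\in\SSS^{\{M_p\}}(\RR^{2d})$, for every $(x,y,\xi)$ the function $\eta\mapsto W(x-y,\xi-\eta)$ is in $\SSS^{\{M_p\}}(\RR^d)$, so
\begin{equation*}
G(x,y,\xi):=\langle g(\eta),\,W(x-y,\xi-\eta)\rangle
\end{equation*}
is well defined. Testing the Roumieu continuity of $g$ on a Banach step $\SSS^{M_p,h}$ with $h$ small, and using the shift inequality \eqref{ine-for-seq-assoc} to convert the weight $e^{M(h|\eta|)}$ into an $e^{M(h|\xi|)}$-factor absorbed against the $e^{-M(c'|\xi-\eta|)}$-decay of $W$, one obtains estimates
\begin{equation*}
|D^\alpha_x D^{\alpha'}_y G(x,y,\xi)|\le C_0L^{|\alpha+\alpha'|}M_{\alpha+\alpha'}\,e^{-2r'\langle x-y\rangle^q}\,e^{N_{k_p}(|\xi|)}
\end{equation*}
for some $(k_p)\in\mathfrak{R}$ determined by $g$. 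Define
\begin{equation*}
b(x,\xi):=\int_{\RR^d} f(y)\,[P(-D_y)G(x,\cdot,\xi)](y)\,dy.
\end{equation*}
The bound $|c_\alpha|\le CL^{|\alpha|}/M_\alpha$ (valid for any $L>0$) on the coefficients of $P$ only dilates the constants in the estimate for $G$, and since $\langle x-y\rangle^q/\langle y\rangle^q\to 1$ uniformly in $x\in K$ (compact) as $|y|\to\infty$, the decay $e^{-2r'\langle x-y\rangle^q}$ dominates $e^{l\langle y\rangle^q}$ on the tails (here one uses $l<2r'$). Hence the integral converges absolutely and may be differentiated under the integral sign, producing
\begin{equation*}
h^{|\alpha|}M_\alpha^{-1}|D^\alpha_x b(x,\xi)|\le C_K e^{N_{k_p}(|\xi|)},\qquad x\in K,\ \xi\in\RR^d,\ \alpha\in\NN^d,
\end{equation*}
which is exactly condition (iii) in the definition of $\mathcal{A}_{\varphi_1,\varphi_2}$; conditions (i) and (ii) follow from the same dominated-convergence argument.

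It remains to identify $b$ with the $\DD'^{\{M_p\}}$-convolution $a*W$. By Theorem~\ref{dcon-exi-con} and Remark~\ref{rem-for-exi-con-alt-def-cor}, this reduces, for every $\varphi\in\DD^{\{M_p\}}(\RR^{2d})$, to checking that $(\varphi*\check W)\cdot a\in\DD'^{\{M_p\}}_{L^1}(\RR^{2d})$ and
\begin{equation*}
\langle(\varphi*\check W)\,a,1\rangle=\int_{\RR^{2d}} b(x,\xi)\varphi(x,\xi)\,dxd\xi.
\end{equation*}
The smooth function $\varphi*\check W$ inherits from $W$ super-exponential decay in the first $\RR^d$-variable (at a rate arbitrarily close to $2r$) and $e^{-M(c|\cdot|)}$-decay in the second, so its product with any $\chi\in\dot{\mathcal B}^{\{M_p\}}(\RR^{2d})$ is a Schwartz-type function of precisely the kind against which the factored ultradistribution $a=(P(D)f)\otimes g$ acts: the $x$-variable pairs $f$ against the super-exponentially decaying factor (using $l<2r$), and the $\xi$-variable uses $g\in\SSS'^{\{M_p\}}$ against the $\SSS^{\{M_p\}}$-profile. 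The main obstacle is verifying the continuity of this scalar pairing in $\chi$ in the topology of $\DD^{\{M_p\}}_{L^\infty,c}(\RR^{2d})$; this is achieved by once more applying \eqref{ine-for-seq-assoc} to balance the $e^{M(h|\cdot|)}$-weights against the $e^{-M(c'|\cdot|)}$-decay of $W$, together with the existence of an $(r_p)\in\mathfrak{R}$ controlling the growth of $g$. The identity with $\int b\varphi$ then drops out by Fubini applied to the same iterated pairing used to define $b$, finishing the proof.
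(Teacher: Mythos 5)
Your strategy is essentially the paper's: reduce to one term, use the decay of $W(\varphi_2,\varphi_1)$ from Proposition \ref{ex1} and Corollary \ref{posebno} together with the gap $l<2r$, establish existence of the $\DD'^{\{M_p\}}$-convolution via Theorem \ref{dcon-exi-con}, and identify $a*W(\varphi_2,\varphi_1)$ with an explicit absolutely convergent integral satisfying \eqref{gelfand1}. However, your central quantitative claim, the bound $|D^\alpha_xD^{\alpha'}_yG(x,y,\xi)|\le C_0L^{|\alpha+\alpha'|}M_{\alpha+\alpha'}e^{-2r'\langle x-y\rangle^q}e^{N_{k_p}(|\xi|)}$, does not follow from the argument you give. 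Testing the continuity of $g$ on a Banach step $\SSS^{M_p,h}_{M_p,h}$ and shifting weights via \eqref{ine-for-seq-assoc} yields, for each fixed $h>0$, a bound with $e^{M(2h|\xi|)}$ and an $h$-dependent constant; since $e^{N_{k_p}(\lambda)}e^{-M(h\lambda)}\to 0$ for every $h>0$, a bound by $e^{N_{k_p}(|\xi|)}$ with a \emph{single} $(k_p)\in\mathfrak{R}$ is strictly stronger, and it is precisely what condition $(iii)$ of the definition of $\mathcal{A}_{\varphi_1,\varphi_2}$ demands. To close this you must either use the structure theorem for $\SSS'^{\{M_p\}}(\RR^d)$, writing $g=\tilde P(D)\tilde g$ with $\tilde g$ continuous of ultrapolynomial growth so that \eqref{equ-for-con-equ-for-gro} supplies one $(r_p)$ with $e^{-N_{r_p}(|\cdot|)}\tilde g\in L^\infty(\RR^d)$ (this is the paper's route, and it also turns all pairings into honest integrals), or apply \eqref{equ-for-con-equ-for-gro} directly to the $\xi$-dependent supremum, in which case you must additionally verify that this supremum is a continuous function of $\xi$, since \eqref{equ-for-con-equ-for-gro} is only stated for continuous functions.

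A second, smaller issue: you chose to verify condition $(iii)$ of Theorem \ref{dcon-exi-con}, namely $(\varphi*\check W)a\in\DD'^{\{M_p\}}_{L^1}(\RR^{2d})$, and you explicitly leave the required continuity on $\dot{\mathcal B}^{\{M_p\}}(\RR^{2d})$ as a described ``obstacle'' rather than proving it; since $a$ is a genuine ultradistribution, this is the hardest step of that route. The symmetric condition $(ii)$ is much cheaper and is what the paper uses: after moving $P(D_x)$ and $\tilde P(D_\xi)$ onto the test function, $\check a*\chi$ is a continuous function bounded by $C e^{(l+\varepsilon)\langle x\rangle^q}e^{M(2h|\xi|)}$ (via \eqref{est-otr-papdjst} and \eqref{ine-for-seq-assoc}), so its product with $W(\varphi_2,\varphi_1)$ lies in $L^1(\RR^{2d})$ directly by Proposition \ref{ex1}. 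I would switch to condition $(ii)$, or else actually carry out the continuity estimate you have only sketched.
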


\begin{proof} Clearly, it suffices to show the claim when $k=1$, i.e. when $a(x,\xi)=(P(D_x)f(x))\otimes g(\xi)$. We claim that for any $\chi\in\DD^{\{M_p\}}(\RR^{2d})$,
\begin{equation}\label{tos-for-exi-con-dd}
(\check{a}*\chi)W(\varphi_2,\varphi_1)\in L^1(\RR^{2d}).
\end{equation}
In view of \cite[Proposition 2.5]{PPV-JMPA}, there is an ultradifferential operator $\tilde{P}$ of class $\{M_p\}$ and a continuous function $\tilde{g}$ on $\RR^d$ with ultrapolynomial growth of class $\{M_p\}$ such that $g=\tilde{P}(D)\tilde{g}$. Set $\chi_1(x,\xi):=P(-D_x)\tilde{P}(-D_{\xi})\chi(x,\xi)$; clearly $\chi_1\in\DD^{\{M_p\}}(\RR^{2d})$. Then, by employing \eqref{ine-for-seq-assoc} and the inequality \cite[Lemma 3.5]{PPV}
\begin{equation}\label{est-otr-papdjst}
s\langle x-y\rangle^q\leq s\langle x\rangle^q+ q2^{q-1}|s||y|\langle x\rangle^{q-1}+q2^{q-1}|s||y|\langle y\rangle^{q-1},\quad \mbox{for all}\,\, x,y\in\RR^d,\, s\in\RR\backslash\{0\},
\end{equation}
for any $h>0$ and $\varepsilon>0$ we infer
\begin{align*}
|\check{a}*\chi(x,\xi)|&=\left|\int_{\RR^{2d}}f(y) \tilde{g}(\eta)\chi_1(x+y,\xi+\eta)dyd\eta\right|\\
&\leq C_1\int_{\RR^{2d}}e^{l\langle y-x\rangle^q} e^{M(h|\eta-\xi|)}|\chi_1(y,\eta)|dyd\eta\\
&\leq C_2e^{l\langle x\rangle^q}e^{M(2h|\xi|)}\int_{\supp\chi_1}e^{q2^{q-1}l|y|\langle x\rangle^{q-1}+q2^{q-1}l\langle y\rangle^q} e^{M(2h|\eta|)}dyd\eta\\
&\leq C_3e^{(l+\varepsilon)\langle x\rangle^q}e^{M(2h|\xi|)}.
\end{align*}
Now the validity of \eqref{tos-for-exi-con-dd} follows from Proposition \ref{ex1}. Hence, Theorem \ref{dcon-exi-con} implies that the $\DD'^{\{M_p\}}$-convolution of $a$ and $W(\varphi_2,\varphi_1)$ exists. By direct inspection, one verifies that (cf. Remark \ref{rem-for-exi-con-alt-def-cor})
$$
a*W(\varphi_2,\varphi_1)(x,\xi)=\int_{\RR^{2d}}f(y)\tilde{g}(\eta) P(D_x)\tilde{P}(D_{\xi})W(\varphi_2,\varphi_1)(x-y,\xi-\eta)dyd\eta;
$$
the latter is a well-defined $\mathcal{C}^{\infty}$ function in view of Corollary \ref{posebno} and \eqref{est-otr-papdjst}. Corollary \ref{posebno} together with the fact $e^{-N_{r_p}(|\cdot|)}\tilde{g}\in L^{\infty}(\RR^d)$, for some $(r_p)\in\mathfrak{R}$, (cf. \eqref{equ-for-con-equ-for-gro}) also proves that $a*W(\varphi_2,\varphi_1)$ satisfies the bounds \eqref{gelfand1} and the proof of the theorem is complete.
\end{proof}

\begin{remark}
The ultradistribution $a$, in general, may not belong to the space \eqref{set-from-ord-the-a}. For example $a(x,\xi):=e^{l\langle x\rangle^q}\otimes g(\xi)$, with $l\in(0,2r)$ and $g\in\SSS'^{\{M_p\}}(\RR^d)\backslash\{0\}\subseteq \SSS'^{\{p!^{1/q}\}}_{\{M_p\}}(\RR^d)\backslash\{0\}$, does not belong to \eqref{set-from-ord-the-a}. To see this, assume
$$
a\in \SSS'^{\{M_p\}}_{\{p!^{1/q}\}}(\RR^d)\hat{\otimes} \SSS'^{\{p!^{1/q}\}}_{\{M_p\}}(\RR^d)=(\SSS^{\{M_p\}}_{\{p!^{1/q}\}}(\RR^d)\hat{\otimes} \SSS^{\{p!^{1/q}\}}_{\{M_p\}}(\RR^d))'.
$$
Set $\psi(x):=e^{-(l/2)\langle x\rangle^q}\in\SSS^{\{M_p\}}_{\{p!^{1/q}\}}(\RR^d)$ and pick $\theta\in\SSS^{\{p!^{1/q}\}}_{\{M_p\}}(\RR^d)\subseteq \SSS^{\{M_p\}}(\RR^d)$ such that $\langle g,\theta\rangle>0$. Choose $\chi\in\DD^{\{M_p\}}(\RR^d)$ such that $0\leq \chi\leq 1$, $\chi(x)=1$ if $|x|\leq 1$ and $\chi(x)=0$ when $|x|\geq 2$. For each $n\in\ZZ_+$, set $\psi_n(x)=\chi(x/2^n)\psi(x)\in\DD^{\{M_p\}}(\RR^d)$. Then
\begin{equation}\label{equ-for-cex-spa-addit}
\langle a,\psi_n\otimes \theta\rangle=\langle g,\theta\rangle\int_{\RR^d}e^{(l/2)\langle x\rangle^q} \chi(x/2^n)dx.
\end{equation}
Since $\psi_n\rightarrow \psi$ in $\SSS^{\{M_p\}}_{\{p!^{1/q}\}}(\RR^d)$, we infer that $\psi_n\otimes \theta\rightarrow \psi\otimes \theta$ in $\SSS^{\{M_p\}}_{\{p!^{1/q}\}}(\RR^d)\hat{\otimes} \SSS^{\{p!^{1/q}\}}_{\{M_p\}}(\RR^d)$. Consequently, the left-hand side of \eqref{equ-for-cex-spa-addit} tends to $\langle a,\psi\otimes \theta\rangle$. But, monotone convergence implies that the right-hand side of \eqref{equ-for-cex-spa-addit} tends to $\infty$.
\end{remark}

Finally, we consider the case when the functions $\varphi_1$ and $\varphi_2$ have very fast decay. We start with the following technical result.

\begin{lemma}\label{bou-brc-qsmlll}
The following inequality holds true
$$
\langle x-y\rangle^q\leq \langle x\rangle^q+\langle y\rangle^q,\quad \mbox{for all}\,\, x,y\in\RR^d,\, q\in[0,1].
$$
\end{lemma}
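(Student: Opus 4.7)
The plan is to split the claim into two elementary facts and combine them via monotonicity. First, I would establish the triangle-type bound $\langle x-y\rangle \leq \langle x\rangle + \langle y\rangle$, i.e.\ the case $q=1$. Squaring both sides, this reduces to checking
\[
1+|x-y|^2 \;\leq\; 2 + |x|^2 + |y|^2 + 2\sqrt{(1+|x|^2)(1+|y|^2)},
\]
which follows directly from the ordinary triangle inequality $|x-y|\leq |x|+|y|$ (giving $|x-y|^2 \leq |x|^2+|y|^2+2|x||y|$) together with the trivial bound $\sqrt{(1+|x|^2)(1+|y|^2)} \geq |x||y|$; the leftover constant $1$ on the right absorbs the constant on the left.

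Second, I would invoke the classical subadditivity of the power function for concave exponents: $(a+b)^q \leq a^q+b^q$ for $a,b\geq 0$ and $q\in[0,1]$. The standard reduction is to assume WLOG $0<b\leq a$, set $t=b/a\in[0,1]$, divide by $a^q$, and observe that $(1+t)^q \leq 1+t^q$; this last inequality follows from the tangent-line estimate $(1+t)^q \leq 1+qt$ (valid by concavity of $s\mapsto s^q$ on $[0,\infty)$) combined with $qt\leq t \leq t^q$ for $t\in[0,1]$. This is precisely the step that uses $q\leq 1$.

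Combining the two facts with the monotonicity of $s\mapsto s^q$ on $[0,\infty)$ yields
\[
\langle x-y\rangle^q \;\leq\; (\langle x\rangle+\langle y\rangle)^q \;\leq\; \langle x\rangle^q+\langle y\rangle^q,
\]
which is the claim. No serious obstacle is anticipated: both ingredients are standard elementary inequalities, and the restriction $q\in[0,1]$ enters only through the subadditivity of the power function. The boundary cases $q=0$ (which reads $1\leq 2$) and $q=1$ (the triangle-type bound itself) are already handled by the two steps above.
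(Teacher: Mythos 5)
Your proof is correct and rests on the same two elementary ingredients as the paper's (the triangle-type bound $|x-y|^2\leq|x|^2+|y|^2+2|x||y|$ and the subadditivity of $s\mapsto s^q$ for $q\in[0,1]$); the only difference is organizational, in that you factor the argument through the $q=1$ case $\langle x-y\rangle\leq\langle x\rangle+\langle y\rangle$ and then compose with the concave power, whereas the paper works with the squared quantities throughout and recognizes $\langle x\rangle^{2q}+\langle y\rangle^{2q}+2|x|^q|y|^q$ as bounded by the perfect square $(\langle x\rangle^q+\langle y\rangle^q)^2$. Both routes are complete and equally elementary.
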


\begin{proof} When $q=0$, the inequality is trivial. For $0< q\leq1$, the claim follows from
$$
\langle x-y\rangle^{2q}\leq (1+|x|^2+|y|^2+2|x||y|)^q\leq \langle x\rangle^{2q}+\langle y\rangle^{2q}+2|x|^q|y|^q\leq (\langle x\rangle^q+\langle y\rangle^q)^2.
$$
\end{proof}

\begin{theorem}\label{ext221}
Let $\varphi_1(x)=\exp(-t_1 e^{\langle x\rangle^q})$ and $\varphi_2(x)=\exp(-t_2 e^{\langle x\rangle^q})$, $q,t_1,t_2>0$. Let $f_j\in L^1_{\operatorname{loc}}(\RR^d)$, $j=1,\ldots,k$, be such that $\exp(-e^{l\langle\cdot\rangle^q})f_j\in L^{\infty}(\RR^d)$, $j=1,\ldots,k$, for some $0<l<1$. Then for any $g_j\in\SSS'^{\{M_p\}}(\RR^d)$, $j=1,\ldots,k$, and ultradifferential operators $P_j$, $j=1,\ldots,k$, of class $\{M_p\}$, the ultradistribution
$$
a(x,\xi)= \sum_{j=1}^k(P_j(D_x)f_j(x))\otimes g_j(\xi)
$$
belongs to $\mathcal{A}_{\varphi_1,\varphi_2}$.
\end{theorem}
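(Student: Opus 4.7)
My plan is to follow the structure of the proof of Theorem \ref{ext22}, with Proposition \ref{ex111} and Corollary \ref{iter2} replacing Proposition \ref{ex1} and Corollary \ref{posebno}, and with a shift estimate adapted to double exponentials replacing a direct application of \eqref{est-otr-papdjst}. By linearity of the claim it suffices to treat $a(x,\xi) = (P(D_x)f(x))\otimes g(\xi)$ with $|f(x)|\leq C_0\exp(e^{l\langle x\rangle^q})$; write $g = \tilde P(D)\tilde g$ via \cite[Proposition 2.5]{PPV-JMPA}, with $\tilde g$ continuous of ultrapolynomial growth of class $\{M_p\}$. By Proposition \ref{121}, the windows $\varphi_1,\varphi_2$ satisfy the hypotheses of Proposition \ref{ex111} with dominating function $\exp(-\tau e^{\langle\cdot\rangle^q})$ for any $\tau\in(0,\min\{t_1,t_2\})$; Corollary \ref{iter2} then yields constants $h,c,C>0$ with
$$\frac{h^{|\alpha+\beta|}}{M_\alpha\,\beta!^{1/q}}\bigl|D_x^\alpha D_\xi^\beta P(D_x)\tilde P(D_\xi)W(\varphi_2,\varphi_1)(x,\xi)\bigr|\leq C\exp(-\tau e^{\langle x\rangle^q})e^{-M(c|\xi|)}.$$

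The main obstacle is the following uniform double-exponential shift estimate: for every $R>0$ and every $l'\in(l,1)$ there exists $C_{R,l'}>0$ with
$$\exp(e^{l\langle y-x\rangle^q})\leq C_{R,l'}\exp(e^{l'\langle x\rangle^q}),\quad y\in B_R,\ x\in\RR^d. \quad (\star)$$
This is established by $\langle y-x\rangle\leq\langle y\rangle+\langle x\rangle$ combined with Lemma \ref{bou-brc-qsmlll} when $q\leq 1$, and with the elementary bound $(R_1+\langle x\rangle)^q\leq\langle x\rangle^q+qR_1(R_1+\langle x\rangle)^{q-1}$ when $q\geq 1$ (where $R_1=\sqrt{1+R^2}$); in either case the excess of $l\langle y-x\rangle^q$ over $l\langle x\rangle^q$ is of strictly lower order in $\langle x\rangle^q$, hence for $|x|$ large it is absorbed into $(l'-l)\langle x\rangle^q$, while on compacts in $x$ both sides of $(\star)$ are bounded. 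Setting $\chi_1:=P(-D_x)\tilde P(-D_\xi)\chi\in\DD^{\{M_p\}}(\RR^{2d})$ (supported in some $B_R\times B_R$) and integrating by parts,
$$(\check a\ast\chi)(x,\xi)=\int f(y-x)\tilde g(\eta-\xi)\chi_1(y,\eta)\,dy\,d\eta.$$
Combining $(\star)$ with $|\tilde g(\eta-\xi)|\leq C'e^{M(h'(|\eta|+|\xi|))}$ and \eqref{ine-for-seq-assoc} yields $|(\check a\ast\chi)(x,\xi)|\leq C''\exp(e^{l'\langle x\rangle^q})e^{M(2h'|\xi|)}$. Since $\tilde g$ has ultrapolynomial growth of class $\{M_p\}$, $h'$ may be chosen small enough that $e^{M(2h'|\xi|)-M(c|\xi|)}\in L^1(\RR^d)$, and $l'<1$ forces $\exp(e^{l'\langle x\rangle^q}-\tau e^{\langle x\rangle^q})\in L^1(\RR^d)$. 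Hence $(\check a\ast\chi)W(\varphi_2,\varphi_1)\in L^1(\RR^{2d})$, and Theorem \ref{dcon-exi-con} delivers the $\DD'^{\{M_p\}}$-convolution $b:=a\ast W(\varphi_2,\varphi_1)$.

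To complete the proof that $a\in\mathcal A_{\varphi_1,\varphi_2}$, it remains to check the regularity and growth of $b$. By Remark \ref{rem-for-exi-con-alt-def-cor} and the same integration by parts,
$$b(x,\xi)=\int f(y)\tilde g(\eta)P(D_x)\tilde P(D_\xi)W(\varphi_2,\varphi_1)(x-y,\xi-\eta)\,dy\,d\eta;$$
Corollary \ref{iter2} applied to the integrand, together with $(\star)$ now used at the point $(x-y,\xi-\eta)$ and \eqref{equ-for-con-equ-for-gro} for $\tilde g$, shows that the integral converges absolutely for every $(x,\xi)$, that $D_x$-derivatives may be passed under the integral to any order, and that the resulting function $b$ is jointly continuous on $\RR^{2d}$, $\mathcal{C}^\infty$ in $x$, and satisfies \eqref{gelfand1} locally uniformly in $x$ for some $(k_p)\in\mathfrak R$. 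The only new analytic content beyond the proof of Theorem \ref{ext22} is the double-exponential shift estimate $(\star)$.
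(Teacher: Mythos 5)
Your proposal follows essentially the same route as the paper's proof: reduce to $k=1$, write $g=\tilde P(D)\tilde g$ by the structure theorem, feed the windows through Proposition \ref{121} into Proposition \ref{ex111} and Corollary \ref{iter2}, verify the $L^1$ condition of Theorem \ref{dcon-exi-con} via a shifted double-exponential bound, and then obtain smoothness and the bounds \eqref{gelfand1} for $b=a*W(\varphi_2,\varphi_1)$ by differentiating under the integral. Your inequality $(\star)$ is a correct and somewhat cleaner packaging of what the paper does with \eqref{est-otr-papdjst} (resp.\ Lemma \ref{bou-brc-qsmlll} when $q<1$): the paper keeps the excess term $\exp(C_2'e^{l\langle x\rangle^q}e^{C_1'\langle x\rangle^{q-1}})$ explicit and only absorbs it at the integrability stage, while you absorb it immediately into $\exp(e^{l'\langle x\rangle^q})$ with $l'\in(l,1)$; the two are equivalent.

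Two points need attention. First, your assertion that Proposition \ref{121} places $\varphi_1,\varphi_2$ in the hypotheses of Proposition \ref{ex111} is not automatic: Proposition \ref{121} only yields derivative bounds of order $|\alpha|!^{1+\rho}$ for $\rho>0$ (never $\rho=0$), so one needs $p!^{1+\rho}\subset M_p$ for \emph{some} $\rho>0$, which does not follow from $p!\subset M_p$ alone; the paper derives it from the standing assumption $(M.3)$ via \cite[Corollary 1.4~$(a)$]{Petzsche88}, and this step should appear in your argument. Second, Proposition \ref{ex111} and Corollary \ref{iter2} are stated for exponents $\geq 1$, so for $q\in(0,1)$ they must be invoked with $\tilde q=\max\{q,1\}$ (giving $\beta!^{1/\tilde q}$ rather than $\beta!^{1/q}$ in your displayed estimate); this is harmless since the resulting bound is stronger, but it should be said. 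The final verification of \eqref{gelfand1} is only sketched (the paper inserts the factors $\langle y\rangle^{d+1}\langle\eta\rangle^{d+1}$ and passes to a smaller sequence $(r_p')\in\mathfrak{R}$ to make the $\eta$-integral converge while keeping a bound of the form $e^{N_{k_p}(|\xi|)}$), but the idea you indicate is the right one.
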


\begin{proof} Clearly, it suffices to prove the claim for $k=1$, i.e. when $a(x,\xi)=(P(D_x)f(x))\otimes g(\xi)$. Set $t:=\min\{t_1,t_2\}>0$. By \cite[Proposition 2.5]{PPV-JMPA}, there is an ultradifferential operator $\tilde{P}$ of class $\{M_p\}$ and a continuous function $\tilde{g}$ on $\RR^d$ with ultrapolynomial growth of class $\{M_p\}$ such that $g=\tilde{P}(D)\tilde{g}$. There is $(r_p)\in\mathfrak{R}$ such that $e^{-N_{r_p}(|\cdot|)}\tilde{g}\in L^{\infty}(\RR^d)$ (cf. \eqref{equ-for-con-equ-for-gro}). Since $\{M_p\}$ satisfies $(M.3)$, \cite[Corollary 1.4 $(a)$]{Petzsche88} implies that there exists $\rho>0$ such that $p!^{1+\rho}\subset M_p$. Fix $\tau\in(0,t)$ and set $\tilde{q}=\max\{q,1\}$. Proposition \ref{121} together with Proposition \ref{ex111} and Corollary \ref{iter2} give the following bounds: there are $h,c>0$ such that
\begin{gather}
\sup_{\alpha,\beta\in\NN^d}\sup_{x,\xi\in\RR^d}\frac{h^{|\alpha+\beta|} \exp(\tau e^{\langle x\rangle^{q}})e^{M(c|\xi|)} |\partial^{\alpha}_x\partial^{\beta}_{\xi}W(\varphi_2,\varphi_1)(x,\xi)|} {M_{\alpha}\beta!^{1/\tilde{q}}}<\infty;\label{bou-forest-lt1}\\
\sup_{\alpha,\beta\in\NN^d}\sup_{x,\xi\in\RR^d}\frac{h^{|\alpha+\beta|} \exp(\tau e^{\langle x\rangle^{q}})e^{M(c|\xi|)} |\partial^{\alpha}_x\partial^{\beta}_{\xi}P(D_x)\tilde{P}(D_{\xi}) W(\varphi_2,\varphi_1)(x,\xi)|} {M_{\alpha}\beta!^{1/\tilde{q}}}<\infty.\label{bou-for-ittr2}
\end{gather}
We claim that for any $\chi\in\DD^{\{M_p\}}(\RR^{2d})$, \eqref{tos-for-exi-con-dd} holds true; this will verify that the $\DD'^{\{M_p\}}$-convolution of $a$ and $W(\varphi_2,\varphi_1)$ exists. Let $\chi\in\DD^{\{M_p\}}(\RR^{2d})$ and set $\chi_1(x,\xi):= P(-D_x)\tilde{P}(-D_{\xi})\chi(x,\xi)$. Assume first $q\geq 1$. Employing \eqref{est-otr-papdjst} and \eqref{ine-for-seq-assoc}, we infer
\begin{align*}
|\check{a}*\chi(x,\xi)|&\leq C_1\int_{\RR^{2d}}\exp(e^{l\langle y-x\rangle^q}) e^{N_{r_p}(|\eta-\xi|)}|\chi_1(y,\eta)|dyd\eta\\
&\leq C_2 e^{N_{r_p}(2|\xi|)}\int_{\supp\chi_1}\exp(e^{l\langle x\rangle^q} e^{q2^{q-1}l|y|\langle x\rangle^{q-1}}e^{q2^{q-1}l\langle y\rangle^q}) e^{N_{r_p}(2|\eta|)}dyd\eta\\
&\leq C_3 e^{N_{r_p}(2|\xi|)}\exp(C'_2e^{l\langle x\rangle^q} e^{C'_1\langle x\rangle^{q-1}}),
\end{align*}
for some $C'_1,C'_2,C_3>0$ depending on $\supp\chi_1$. Now, the validity of \eqref{tos-for-exi-con-dd} follows from \eqref{bou-forest-lt1}. When $q\in(0,1)$, one employs Lemma \ref{bou-brc-qsmlll} instead of \eqref{est-otr-papdjst} to obtain $|\check{a}*\chi(x,\xi)|\leq C_3 e^{N_{r_p}(2|\xi|)}\exp(C'_2e^{l\langle x\rangle^q})$ with possibly different $C_3, C'_2>0$ and, again, \eqref{tos-for-exi-con-dd} follows from \eqref{bou-forest-lt1}. We conclude that the $\DD'^{\{M_p\}}$-convolution of $a$ and $W(\varphi_2,\varphi_1)$ exists.\\
\indent We claim that the function
$$
F:\RR^{2d}\rightarrow \CC,\, F(x,\xi)=\int_{\RR^{2d}}f(y)\tilde{g}(\eta) P(D_x)\tilde{P}(D_{\xi})W(\varphi_2,\varphi_1)(x-y,\xi-\eta)dyd\eta,
$$
is well-defined and $F\in\mathcal{C}^{\infty}(\RR^{2d})$. Pick $(r'_p)\in\mathfrak{R}$ such that
$$
(r'_p)\leq (r_p)\quad \mbox{and}\quad \sup_{\lambda>0}\lambda^{d+1}e^{N_{r_p}(\lambda)}e^{-N_{r'_p}(\lambda)}<\infty;
$$
for example, $r'_j:=r_1/H$, $j=1,\ldots,d+1$, and $r'_j:=r_{j-d-1}/H$, $j\geq d+2$. Assume first $q\geq 1$. Choose $l'\in(l,1)$. Let $K$ be an arbitrary but fixed compact subset of $\RR^d$. Employing \eqref{est-otr-papdjst}, \eqref{ine-for-seq-assoc} and \eqref{bou-for-ittr2}, for $\alpha,\beta\in\NN^d$ and $x\in K$, $\xi,\eta\in\RR^d$, $y\in\RR^d\backslash (\mbox{nullset})$, we infer
\begin{align*}
|f(y)|&|\tilde{g}(\eta)| |D^{\alpha}_xD^{\beta}_{\xi}P(D_x)\tilde{P}(D_{\xi})W(\varphi_2,\varphi_1)(x-y,\xi-\eta)| \langle y\rangle^{d+1}\langle \eta\rangle^{d+1}\\
&\leq C_4h^{-|\alpha|-|\beta|} M_{\alpha}\beta!^{1/\tilde{q}}\exp(e^{l'\langle y\rangle^q})e^{N_{r'_p}(|\eta|)} \exp(-\tau e^{\langle x-y\rangle^q}) e^{-M(c|\xi-\eta|)}\\
&\leq 2C_4h^{-|\alpha|-|\beta|} M_{\alpha}\beta!^{1/\tilde{q}} \exp(e^{l'\langle y-x\rangle^q} e^{q2^{q-1}l'|x|\langle y-x\rangle^{q-1}} e^{q2^{q-1}l'\langle x\rangle^q})\exp(-\tau e^{\langle x-y\rangle^q})\\
&{}\quad\cdot e^{N_{r'_p}(2|\eta-\xi|)} e^{N_{r'_p}(2|\xi|)} e^{-M(c|\xi-\eta|)}\\
&\leq C_5h^{-|\alpha|-|\beta|} M_{\alpha}\beta!^{1/\tilde{q}} e^{N_{r'_p}(2|\xi|)} \exp(C''_2e^{l'\langle x-y\rangle^q} e^{C''_1\langle x-y\rangle^{q-1}}) \exp(-\tau e^{\langle x-y\rangle^q}),
\end{align*}
with some $C''_1,C''_2>0$ which depend on $K$. Since $\exp(C''_2e^{l'\langle \cdot\rangle^q} e^{C''_1\langle \cdot\rangle^{q-1}}) \exp(-\tau e^{\langle \cdot\rangle^q})\in L^{\infty}(\RR^d)$, we deduce
$$
|f(y)||\tilde{g}(\eta)| |D^{\alpha}_xD^{\beta}_{\xi}P(D_x)\tilde{P}(D_{\xi}) W(\varphi_2,\varphi_1)(x-y,\xi-\eta)|\leq  \frac{C_6 M_{\alpha}\beta!^{1/\tilde{q}} e^{N_{r'_p}(2|\xi|)}}{h^{|\alpha|+|\beta|}\langle y\rangle^{d+1}\langle \eta\rangle^{d+1}},
$$
for all $x\in K$, $\xi,\eta\in\RR^d$, $y\in\RR^d\backslash(\mbox{nullset})$, $\alpha,\beta\in\NN^d$. Consequently, $F\in \mathcal{C}^{\infty}(\RR^{2d})$ and it satisfies the following bounds: for every compact subset $K$ of $\RR^d$ there is $C>0$ such that
\begin{equation}\label{est-for-add-fun-for-sym}
\frac{h^{|\alpha|+|\beta|}}{M_{\alpha}\beta!^{1/\tilde{q}}} |D^{\alpha}_xD^{\beta}_{\xi}F(x,\xi)|\leq Ce^{N_{r'_p/2}(|\xi|)},\quad \mbox{for all}\,\, x\in K,\, \xi\in\RR^d,\, \alpha,\beta\in\NN^d.
\end{equation}
When $q\in(0,1)$, one employs analogous technique but uses Lemma \ref{bou-brc-qsmlll} instead of \eqref{est-otr-papdjst} to show that $F\in\mathcal{C}^{\infty}(\RR^{2d})$ and that the bounds \eqref{est-for-add-fun-for-sym} hold true in this case as well. Now, by direct inspection, one verifies that $a*W(\varphi_2,\varphi_1)=F$ (cf. Remark \ref{rem-for-exi-con-alt-def-cor}) and \eqref{est-for-add-fun-for-sym} (specialised with $\beta=0$) proves the validity of \eqref{gelfand1}.
\end{proof}

\end{document}